\theoremstyle{plain}
\newtheorem{theorem}{Theorem}
\newtheorem{lemma}[theorem]{Lemma}
\newtheorem{cor}[theorem]{Corollary}
\newtheorem{prop}[theorem]{Proposition}
\newtheorem{defi}[theorem]{Definition}
\newcommand{\gp}{\gamma_{\stackrel{}{P}}}
\begin{document}

\title{Zero Forcing sets and Power Dominating sets of cardinality  at most 2}

\author[2]{Najibeh Shahbaznejad\thanks{najibeh.shahbaznejad@uma.ac.ir}}
\author[1]{Ignacio M. Pelayo\thanks{ignacio.m.pelayo@upc.edu}}
\author[2]{Adel P. Kazemi\thanks{adelpkazemi@yahoo.com}}

\affil[1]{Departament de Matem\`atiques, Universitat Polit\`ecnica de Catalunya, Spain}
\affil[2]{Department of Mathematics, University of Mohaghegh Ardabili, Iran}

%\date{}

\maketitle

%\linenumbers

\begin{abstract}

Let $S$ be a set of vertices  of a graph $G$.
Let $cl(S)$ be the set of vertices built from  $S$,  by iteratively applying  the following propagation  rule:
if a vertex and all but exactly one of its neighbors are in $cl(S)$, then the remaining neighbor is also in $cl(S)$. 
A set  $S$ is called a zero forcing set of $G$ if $cl(S)=V(G)$.
The zero forcing number $Z(G)$ of $G$ is the minimum cardinality of a zero forcing set. 
Let $cl(N[S])$ be the set of vertices built from the closed neighborhood $N[S]$ of  $S$,  by iteratively applying  the previous propagation  rule.
A set  $S$ is called a power dominating set of $G$ if $cl(N[S])=V(G)$.
The power domination number $\gp(G)$ of $G$ is the minimum cardinality of a power dominating set. 
In this paper, we characterize the set of all graphs $G$ for which $Z(G)=2$.
On the other hand, we present a variety of sufficient and/or necessary  conditions for a graph $G$ to satisfy $1 \le \gp(G) \le 2$.

\vspace{+.1cm}\noindent \textbf{Keywords:} zero forcing, domination, power domination, electric power monitoring, maximum nullity.

\vspace{+.1cm}\noindent \textbf{AMS subject classification:} 05C35, 05C69.
\end{abstract}
\vspace{0.5cm}

%%%%%%%%%%%%%%%%%%%%%%%%%Introduction%%%%%%%%%%%%%%%%%%%%%%%%%%%%%%%%%%%
%      introduccion
%%%%%%%%%%%%%%%%%%%%%%%%%Introduction%%%%%%%%%%%%%%%%%%%%%%%%%%%%%%%%%%%

%%%%%%%%%%%%%%%%%%%%%%%%%%%%%%%%%%%%%%%%%%%%%%%%%%%%%%%%%%%%%%%%%%%%%%%%%%%%%%%%%%%%%%%%%%%%%%%%%%
%%%%%%%%%%%%%%%%%%%%%%%%%%%%%%%%%%%%%%%%%%%%%%%%%%%%%%%%%%%%%%%%%%%%%%%%%%%%%%%%%%%%%%%%%%%%%%%%%%
%%%%%%%%%%%%%%%%%%%%%%%%%%%%%%%%%%%%%%%%%%%%%%%%%%%%%%%%%%%%%%%%%%%%%%%%%%%%%%%%%%%%%%%%%%%%%%%%%%
%%%%%%%%%%%%%%%%%%%%%%%%%%%%%%%%%%%%%%%%%%%%%%%%%%%%%%%%%%%%%%%%%%%%%%%%%%%%%%%%%%%%%%%%%%%%%%%%%%
\section{Introduction}\label{sec1:intro}

%%%%%%%%%%%%%%%%%%%%%%%%
%%%%%%%%%%%%%%%%%%%%%%%%
%%%%%

This paper is devoted to the study of both the power domination number of connected graphs introduced in \cite{hahehehe02} and the zero forcing number of connected graphs  introduced in \cite{AIM08}. 

The notion of power domination in graphs is a dynamic version of domination where a set of vertices  (power) dominates larger and larger portions of a graph and eventually dominates the whole graph.
The introduction of this parameter  was mainly  inspired by a problem in the electric power system industry \cite{bamiboad93}. 
Electric power networks must be continuously monitored. 
One usual and efficient way of accomplish this monitoring, consist in placing phase measurement units (PMUs), called PMUs,  at selected network locations. 

Due to the high cost of the PMUs, their number must be minimized,  while maintaining the ability to monitor (i.e. to observe)  the entire network. 
The \emph{power domination problem} consists thus  of finding the minimum number of PMUs needed to monitor a given electric power system.
In other words,  a power dominating set of a graph is a set of vertices that observes every vertex in the graph, following the set of rules for power system monitoring described in \cite{hahehehe02}.

Since it was formally introduced  in \cite{hahehehe02}, the power domination number has generated considerable interes; see, for example, \cite{bfffhv18,bfffhvw18,domoklsp08,dovavi16,fehokeyo17,gunira08,koso16,zhkach06}.

The defnition of the power domination number leads naturally to the introduction and  study of the zero forcing number.
As a matter of fact, the zero forcing number of a connected graph $G$ was
introduced in \cite{AIM08} as a tight  upper bound for the maximum nullity of the set of all  real symmetric matrices whose pattern of off-diagonal entries coincides with off-diagonal entries of the adjacency matrix of $G$,
and independently by mathematical physicists studying control of quantum systems \cite{bugi07}.
Since then, this parameter has been extensively investigated; see, for example,  \cite{bfffhvw18,dakast18,erkayi17,geperaso16,gera18,huchye10,kakasu19}.

In this paper, we present a variety of graph families such that all theirs members have either zero fotcing sets or power dominating sets of cardinality at most 2.
%%%%

%\vspace{1.1cm}
%%%%%%%%%%%%%%%%%%%%%%%%%%%%%%%%%%%%%%%%%%%%%%%%%%%%%%%%%%%%%%%%%%%%%%%%%%%%%%%%%%%%%%%%%%%%%%%%%%
%%%%%%%%%%%%%%%%%%%%%%%%%%%%%%%%%%%%%%%%%%%%%%%%%%%%%%%%%%%%%%%%%%%%%%%%%%%%%%%%%%%%%%%%%%%%%%%%%%
\subsection{Basic terminology}

{\small All the graphs considered are undirected, simple, finite and (unless otherwise stated) connected.
Let $v$ be a vertex of a graph $G$.
The \emph{open neighborhood} of $v$ is $\displaystyle N_G(v)=\{w \in V(G) :vw \in E\}$, and the \emph{closed neighborhood} of $v$ is $N_G[v]=N_G(v)\cup \{v\}$ (we will write $N(v)$ and $N[v]$ if the graph $G$ is clear from the context).
The \emph{degree} of $v$ is $\deg(v)=|N(v)|$.
The minimum degree  (resp. maximum degree) of $G$ is $\delta(G)=\min\{\deg(u):u \in V(G)\}$ (resp. $\Delta(G)=\max\{\deg(u):u \in V(G)\}$).
If $\deg(v)=1$, then $v$ is said to be a  \emph{leaf} of $G$.

The distance between vertices $v,w\in V(G)$ is denoted by $d_G(v,w)$, or $d(v,w)$ if the graph $G$ is clear from the context.
The diameter of $G$ is ${\rm diam}(G) = \max\{d(v,w) : v,w \in V(G)\}$.
Let $W\subseteq V(G)$ be a subset of vertices of  $G$.
The  \emph{open neighborhood} of $W$ is $N(W)=\cup_{v\in W} N(v)$ and the  \emph{closed neighborhood} of $W$ is $N[W]=\cup_{v\in W} N[v]$.
Let $u,v \in V(G)$ be  a pair of vertices such that  $d(u,w)=d(v,w)$ for all $w\in V(G)\setminus\{u,v\}$, i.e.,  such that  either $N(u)=N(v)$ or $N[u]=N[v]$. In both cases, $u$ and $v$ are said to be \emph{twins}.

Let $H$ and $G$ be a pair of graphs. 
The graph $H$ is a  \emph{subgraph} of $G$ if it can be obtained from $G$ by removing edges and vertices.
The graph $H$ is an  \emph{induced subgraph} of $G$ if it can be obtained from $G$ by removing vertices.
The subgraph of $G$ induced by a subset of vertices $W$, denoted by $G[W]$, has $W$ as vertex set and $E(G[W]) = \{vw \in E(G) : v \in W,w \in W\}$.
The graph $H$ is a  \emph{minor} of $G$ if it can be obtained from $G$ by removing vertices and by removing and contracting edges.

A set $D$ of vertices of a graph $G$ is a \emph{dominating set} if $N[D]=V(G)$.
The \emph{domination number} $\gamma(G)$ is the minimum cardinality of a dominating set.

Let $K_n$, $K_{r,n-r}$, $S_n\cong K_{1,n-1}$, $P_n$, $W_n$ and $C_n$ denote, respectively, the complete graph, complete bipartite graph, spider, path, wheel and cycle of order $n$.
For undefined terminology and notation,  we refer the reader to \cite{chlezh11}.

The remainder of this paper is organized into two more sections as follows. 
Section 2 is devoted to introducing the zero forcing sets, the zero forzing number $Z(G)$ of a connected graph $G$ and to characterizing the set of all graphs $G$ for which $Z(G)=2$. 
In Section 3, which is subdivided into  three subsections, power dominating sets and the power domination number $\gp(G)$ of a connected graph $G$ are first introduced and then, in the remaining subsections the problem $1 \le \gp(G) \le 2$ is approached from several perspectives.
In Subsection 3.1, a brief list of basic know and new results are shown. 
Next, in Subsection 3.2, some contributions involving  graphs with high maximum degree are presented,
Finally, the mentioned problem $1 \le \gp(G) \le 2$ is investigated in Subsection 3.3 for two binary operations: the lexicographic product and the Cartesian product.

\newpage
%\vspace{1.1cm}
%%%%%%%%%%%%%%%%%%%%%%%%%%%%%%%%%%%%%%%%%%%%%%%%%%%%%%%%%%%%%%%%%%%%%%%%%%%%%%%%%%%%%%%%%%%%%%%%%%
%%%%%%%%%%%%%%%%%%%%%%%%%%%%%%%%%%%%%%%%%%%%%%%%%%%%%%%%%%%%%%%%%%%%%%%%%%%%%%%%%%%%%%%%%%%%%%%%%%
%%%%%%%%%%%%%%%%%%%%%%%%%%%%%%%%%%%%%%%%%%%%%%%%%%%%%%%%%%%%%%%%%%%%%%%%%%%%%%%%%%%%%%%%%%%%%%%%%%
%%%%%%%%%%%%%%%%%%%%%%%%%%%%%%%%%%%%%%%%%%%%%%%%%%%%%%%%%%%%%%%%%%%%%%%%%%%%%%%%%%%%%%%%%%%%%%%%%%
\section{Zero forcing number}

The concept of \emph{zero forcing} can be described via the following coloring game on the vertices of a given graph $G=(V,E)$. 
Let $U$ be a proper subset of $V$.
The elements of $U$ are colored black, meanwhile the vertices of $W=V\setminus U$ are colored white.
The color change rule is: 

\begin{center}
{\bf \small If $u \in U$ and exactly one neighbor $w$ of $u$ is white, then change the color of $w$ to black.}
\end{center}

In such a case, we  denote this by $u \rightarrow w$, and we say, equivalentely, that   $u$ forces $w$, that $u$ is a forcing vertex of $w$ and also that $u \rightarrow w$ is a force. 
The \emph{closure} of $U$, denoted $cl(U)$, is the set of black vertices obtained after the color change rule is applied until no new
vertex can be forced; it can be shown that $cl(U)$ is uniquely determined by $U$ (see \cite{AIM08}).

\begin{defi}[\cite{AIM08}]
A subset of vertices $U$ of a graph $G$ is called a \emph{zero forcing set} of $G$ if $cl(U)=V(G)$. 
\end{defi}

A \emph{minimum zero forcing set}, a \emph{ZF-set} for short,  is a zero forcing set of minimum cardinality. 
The \emph{zero forcing number} of $G$ , denoted by $Z(G)$, is the cardinality of a ZF-set.

A \emph{chronological list of forces} ${\cal F}_U$ associated with a  set $U$ is a sequence
of forces applied to obtain $cl(B)$ in the order they are applied. 
A \emph{forcing chain} for the chronological list of forces ${\cal F}_U$ is a maximal sequence of vertices $(v_1, . . . , v_k )$ such that
the force $v_i \rightarrow v_{i+1}$   is in ${\cal F}_U$ for $1 \le i  \le k-1$.
Each forcing chain induces a distinct path in $G$, one of whose endpoints is
in $U$; the other is called a terminal.
Notice that a zero forcing chain can consist of a single vertex $(v_1)$, and this happens   if $v_1 \in U$ and $v_1$ does not perform a force.
Observe also that any two forcing chains are disjoint.

For example, if we consider the graph $G$ shown  in  Figure \ref{fig2}, and take the set $U=\{u_1,u_2,u_3\}$, 
then $cl(U)=\{u_1,u_2,u_3,w_2,w_1,w_5\}$,  ${\cal F}_U=\{u_2 \rightarrow w_2,u_1 \rightarrow w_1,w_1 \rightarrow w_5\}$ and thus the list of forcing chains is: $\{(u_1,w_1,w_5),(u_2,w_2),(u_3)\}$.

%%%%%%%%%%%%%%%%%%%%
\begin{figure}[!h]
	\centerline{\includegraphics[height=5cm]{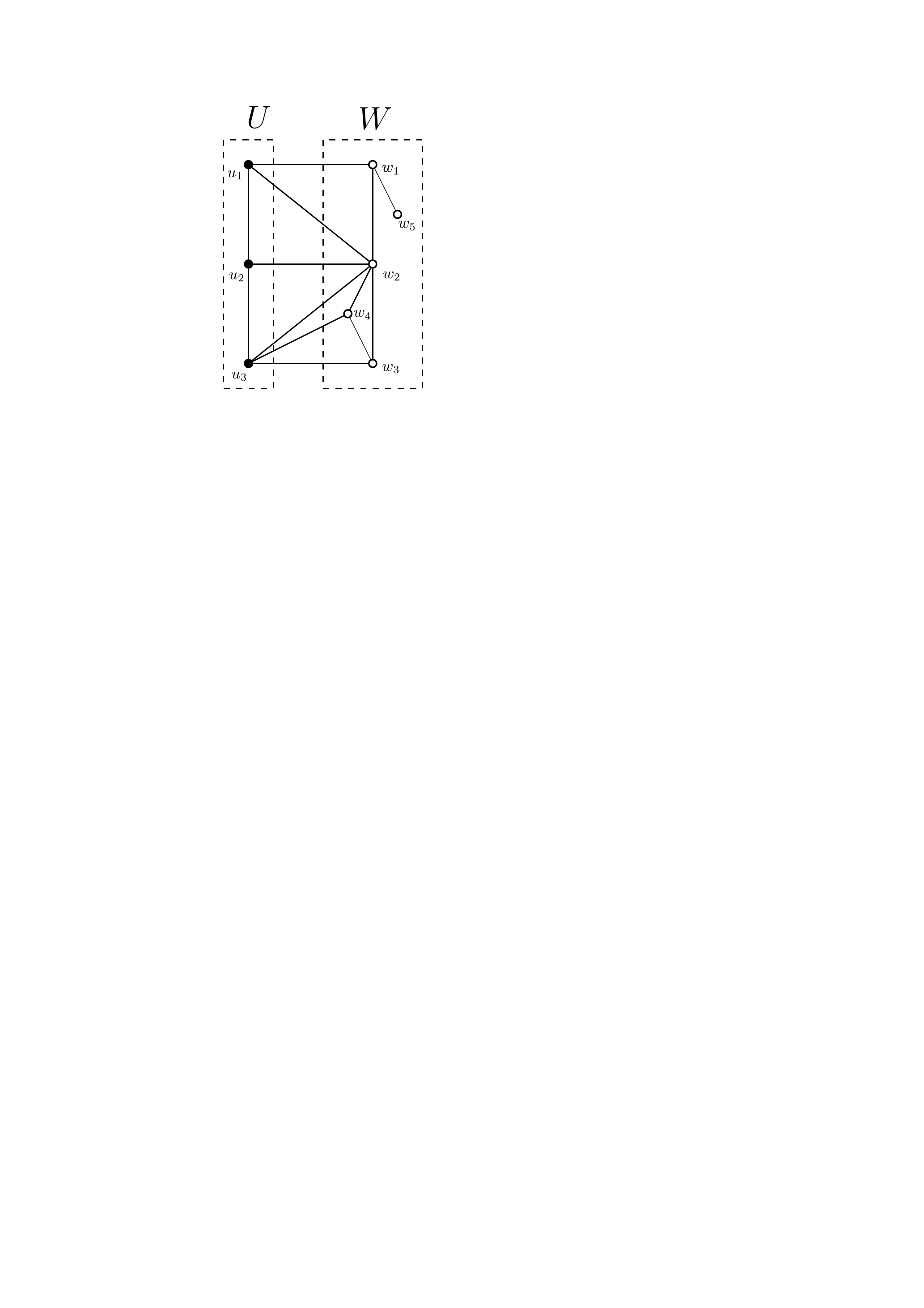}}
	\caption{$V(G)=U \cup W = \{u_1,u_2,u_3\}\cup\{w_1,w_2,w_3,w_4\}$.}
	\label{fig2}
\end{figure}
%%%%%%%%%%%%%%%%%%%

%%%%%%%%%%%%%%%%%%%%%
\begin{prop}[\cite{erkayi17}] 
Let $G$ be a graph of order $n$.
Then,  $Z(G)=1$ if and only if $G$ is the  path $P_n$.
\end{prop}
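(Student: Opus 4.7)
\smallskip

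\noindent\textbf{Proof plan.} I will prove both implications separately, using nothing beyond the definitions.

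\smallskip

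\noindent\emph{Sufficiency.} Assume $G=P_n$ with vertices labelled consecutively $v_1,v_2,\dots,v_n$ along the path. I set $U=\{v_1\}$. Since $v_1$ has the unique neighbor $v_2$ (which is white), the force $v_1\to v_2$ is immediate. By induction on $i$, once $\{v_1,\dots,v_i\}$ is black, the vertex $v_i$ has $v_{i-1}$ (black) and $v_{i+1}$ (white) as its only neighbors, so $v_i\to v_{i+1}$ is a legal force. Iterating yields $cl(U)=V(G)$, so $Z(P_n)\le 1$; trivially $Z(G)\ge 1$ whenever $V(G)\neq\emptyset$.

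\smallskip

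\noindent\emph{Necessity.} Suppose $Z(G)=1$ and fix a ZF-set $U=\{v_1\}$. Because $U$ must eventually colour all of $V(G)$ and $|U|=1$, there is a single chronological list of forces $v_1\to v_2\to v_3\to\cdots\to v_n$, producing one forcing chain that visits every vertex of $G$. I will show by strong induction on $i$ that $N(v_i)=\{v_{i-1},v_{i+1}\}$ for $2\le i\le n-1$, and that $N(v_1)=\{v_2\}$, $N(v_n)=\{v_{n-1}\}$; this forces $G\cong P_n$.

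For $i=1$: at the initial stage every vertex other than $v_1$ is white, and $v_1$ must perform the first force, so $v_1$ has exactly one neighbor, namely $v_2$. Inductive step: assume the claim holds for every $j<i$, so the edges of $G$ among $\{v_1,\dots,v_i\}$ are precisely $v_1v_2,v_2v_3,\dots,v_{i-1}v_i$, and none of $v_1,\dots,v_{i-1}$ is adjacent to any $v_k$ with $k>i$ except via the single edge $v_{j-1}v_j$ of the chain. When $v_i$ is about to force, the black set is $\{v_1,\dots,v_i\}$ and $v_i$ must have exactly one white neighbor; that neighbor is $v_{i+1}$ by definition of the chain. Hence every other neighbor of $v_i$ lies in $\{v_1,\dots,v_{i-1}\}$; but by the inductive hypothesis, the only $v_j$ with $j<i$ adjacent to $v_i$ is $v_{i-1}$. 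Therefore $N(v_i)=\{v_{i-1},v_{i+1}\}$, closing the induction. Finally, applying the same argument to $v_n$ (which performs no force because every vertex is already black) shows $N(v_n)\subseteq\{v_1,\dots,v_{n-1}\}$, and by the inductive conclusion just established this intersection equals $\{v_{n-1}\}$. Consequently $G=P_n$.

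\smallskip

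\noindent\emph{Expected obstacle.} The only delicate point is excluding chord edges $v_iv_j$ with $|i-j|\ge 2$; the inductive bookkeeping above is the cleanest way I see to handle it, since each already-processed vertex of the chain has had its full neighborhood determined at the moment it performed its force, leaving no room for hidden extra edges.
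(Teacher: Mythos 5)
Your proof is correct. Note that the paper does not prove this proposition at all --- it is stated as a known result imported from the cited reference --- so there is no in-paper argument to compare against; your two directions (the greedy forcing down the path for sufficiency, and the induction showing that a single initial black vertex generates one forcing chain whose vertices have no chords, for necessity) are the standard, complete argument, and the inductive bookkeeping you flag as the ``delicate point'' is exactly what rules out extra edges.
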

%%%%%%%%%%%%%%%%%%%%%

A graph is outerplanar if it has a crossing-free embedding in
the plane such that all vertices are on the same face.
The \emph{path cover number} $P(G)$ of a graph $G$ is the smallest positive integer $k$ such
that there are $k$ vertex-disjoint induced paths $P_1, \ldots, P_k$ in $G$  that cover all
the vertices of $G$, i.e., $\displaystyle V(G) = \bigcup _{i=1}^k V(P_i)$.

%%%%%%%%%%%%%%%%%%%%%
\begin{prop}[\cite{bbfhhsh10}] 
For any graph  $G$, $P(G) \le Z(G)$.
\end{prop}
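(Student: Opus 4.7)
The plan is to extract a path cover of $G$ directly from a minimum zero forcing set, using the forcing chains described in the excerpt. I would take $U$ to be a ZF-set of $G$ with $|U|=Z(G)$, fix any chronological list of forces $\mathcal{F}_U$ that witnesses $cl(U)=V(G)$, and let $\mathcal{C}$ be the resulting list of forcing chains. The excerpt already records three properties I need: (a) any two forcing chains are vertex-disjoint, (b) each chain has exactly one endpoint in $U$, and (c) a chain may consist of a single vertex of $U$ that performs no force. Together these give $|\mathcal{C}|=|U|=Z(G)$ and $\bigcup_{C\in\mathcal{C}} V(C)=V(G)$. So the whole theorem reduces to one claim.

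The only non-trivial step, and the one I would identify as the main obstacle, is to show that each forcing chain $(v_1,\dots,v_k)$ induces a \emph{path} in $G$, not merely a walk. Consecutive adjacencies $v_iv_{i+1}\in E(G)$ are free from the definition of a force. What must be ruled out is a chord $v_iv_j\in E(G)$ with $j\ge i+2$. For this I would use the intrinsic ordering of forces within a single chain: the force $v_i\to v_{i+1}$ must occur in $\mathcal{F}_U$ strictly before $v_{i+1}\to v_{i+2}$, and so on inductively; hence at the moment $v_i$ forces $v_{i+1}$, the vertices $v_{i+1},v_{i+2},\dots,v_k$ are all still white (none of them has yet been forced, and none of them belongs to $U$ because the unique $U$-endpoint of the chain is $v_1$). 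A putative chord $v_iv_j$ with $j\ge i+2$ would then give $v_i$ at least two white neighbors, namely $v_{i+1}$ and $v_j$, contradicting the color change rule that permitted the force $v_i\to v_j$, sorry, $v_i\to v_{i+1}$. Thus no such chord exists and $G[\{v_1,\dots,v_k\}]$ is an induced path.

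Once the induced-path claim is established, the conclusion is immediate: $\mathcal{C}$ is a family of $|U|$ vertex-disjoint induced paths whose vertex sets partition $V(G)$, so by definition of path cover number $P(G)\le |\mathcal{C}|=|U|=Z(G)$. I would keep the write-up short, front-loading the properties of chains cited from the definition and concentrating the argument on the chord-freeness step, since this is really the only place where a proof is needed rather than bookkeeping.
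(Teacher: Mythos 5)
The paper does not prove this proposition; it is quoted from \cite{bbfhhsh10} as a known result, so there is no in-paper argument to compare against. Your proof is correct and is essentially the standard one from that reference: the forcing chains of a minimum zero forcing set are pairwise vertex-disjoint, there are exactly $Z(G)$ of them (one starting at each vertex of the ZF-set, possibly a singleton), they cover $V(G)$, and your chord-freeness argument --- that at the moment of the force $v_i\rightarrow v_{i+1}$ all of $v_{i+1},\ldots,v_k$ are still white, so a chord $v_iv_j$ with $j\ge i+2$ would give $v_i$ two white neighbours --- correctly establishes that each chain induces a path.
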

%%%%%%%%%%%%%%%%%%%%%

\newpage
%%%%%%%%%%%%%%%%%%%%%
\begin{theorem}\label{mainzf}
Let $G$ be a graph of order $n\ge5$.
Then,  $Z(G)=2$ if and only if $G$ is an outerplanar graph with $P(G)=2$.
\end{theorem}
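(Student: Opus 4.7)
The plan is to treat the two directions separately, each hinging on the path cover number.

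\emph{Necessity.} Suppose $Z(G)=2$ and fix a minimum ZF-set $U=\{u_1,u_2\}$. The second of the preceding propositions gives $P(G)\le Z(G)=2$, while the first (together with $Z(G)\ne 1$) shows $G$ is not an induced path, so $P(G)\ne 1$ and therefore $P(G)=2$. For outerplanarity I would invoke the Chartrand--Harary characterization and separately rule out a $K_4$-minor and a $K_{2,3}$-minor. The main tool is the chronological list of forces ${\cal F}_U$, which partitions $V(G)$ into two vertex-disjoint induced paths $C_1,C_2$. Whenever a vertex $v\in C_i$ performs its force, all of its neighbors in the other chain $C_{3-i}$ must already be black, so these neighbors form an initial segment of $C_{3-i}$ with respect to its chronology. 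This monotonicity of the cross-edges between the two chains is precisely the ``non-crossing'' property that forbids $K_4$ and $K_{2,3}$ as minors.

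\emph{Sufficiency.} Suppose $G$ is outerplanar with $P(G)=2$ and pick an induced-path partition $V(G)=V(C_1)\cup V(C_2)$, say $C_1=a_1\cdots a_r$ and $C_2=b_1\cdots b_s$. Since $G$ is not a path, the first preceding proposition gives $Z(G)\ge 2$, so it suffices to exhibit a ZF-set of size $2$. Fixing an outerplanar embedding, I would choose $a_1$ and $b_1$ to be the endpoints of the two chains that are consecutive on the outer boundary and then show that $\{a_1,b_1\}$ is zero forcing. The proof is an induction whose invariant is that after each step the black set is an initial segment $\{a_1,\dots,a_i\}\cup\{b_1,\dots,b_j\}$; at every stage outerplanarity rules out the obstructing configuration in which both frontier vertices $a_i$ and $b_j$ have two or more white neighbors, since any two such ``chords'' would be forced to cross in the embedding and thereby yield a $K_4$- or $K_{2,3}$-minor.

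The principal obstacle is the precise translation between the combinatorics of a forcing chronology on two chains and the topology of an outerplanar embedding. A clean route is to isolate a bridging lemma stating that, for a connected graph partitioned into two induced paths $C_1,C_2$, outerplanarity is equivalent to the existence of orientations of the two chains under which, along each chain, the neighborhoods in the other chain shift monotonically. Once this lemma is in place, the orientations supplied by ${\cal F}_U$ prove necessity, and the orientations read off the outerplanar embedding drive the forcing process in sufficiency. What remains is careful but routine casework, distinguishing the $K_4$- from the $K_{2,3}$-obstruction and handling degenerate endpoints where one of $a_i,b_j$ is already the terminal of its chain.
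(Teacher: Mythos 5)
Your proposal matches the paper's proof in all essentials: both directions rest on identifying the two forcing chains with a two-path cover, and on the equivalence between outerplanarity and a monotone (non-crossing) arrangement of the cross-edges between the two chains, with the correct pair of starting endpoints recovered by possibly reversing one chain (your ``bridging lemma'' is exactly the content of the paper's two drawings ${\cal D}_1$ and ${\cal D}_2$). The only cosmetic difference is that for necessity the paper exhibits an explicit two-row outerplanar embedding indexed by the forcing chronology, whereas you phrase the same monotonicity as the exclusion of $K_4$- and $K_{2,3}$-minors.
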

\begin{proof}
($\Rightarrow$):
Let $S_0=\{u_0,v_0\}$ be a ZF-set of $G$. 
Let ${ \cal F}_{S_0}$ a chronological list of forces associated with $S_0$.
Let $(u_0,u_{i_1},\ldots,u_{1_r})$ and $(v_0,v_{j_1},\ldots,v_{j_s})$ be the pair of forcing chains for ${ \cal F}_{S_0}$, where the indices have been assigned having into account the order of the forces of ${ \cal F}_{S_0}$ (see Figure \ref{fch}, for some examples). 
%Then, $n=r+s-2$ and w.l.o.g, we can assume that $i_1=1$.
Notice that $P(G)=2$, since both $G[\{u_0,u_{i_1},\ldots,u_{1_r}\}]$ and $G[\{v_0,v_{j_1},\ldots,v_{j_s}\}]$ are paths.

%%%%%%%%%%%%%%%%%%%%
\begin{figure}[!h]
	\centerline{\includegraphics[height=5cm]{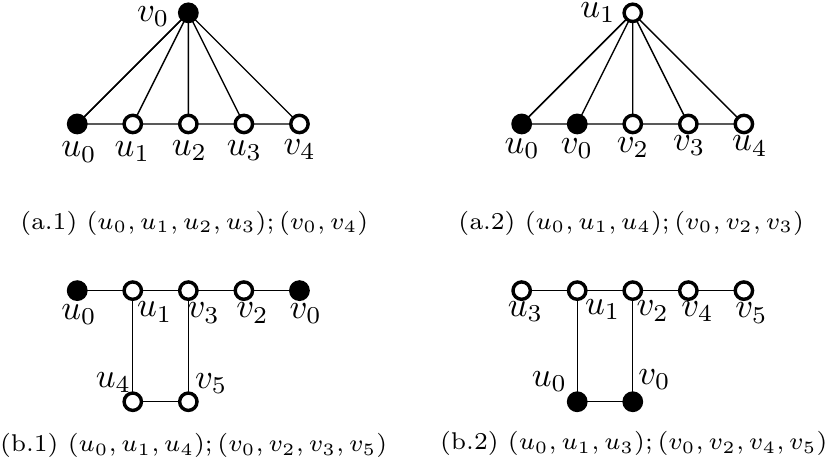}}
	\caption{In all cases, $S_0  = \{u_0,v_0\}$ is a ZF-set.}
	\label{fch}
\end{figure}
%%%%%%%%%%%%%%%%%%%

Next, we embed this graph in the plane in such a way that %$u_0=(0,0)$, $v_0=(0,1)$ and
for every $h\in\{0,1,\ldots,r\}$ and $k\in\{0,1,\ldots,s\}$,   $u_{i_h}=(i_h,0)$ and $v_{j_k}=(j_k,1)$, where $i_0=j_0=0$ (see some examples in Figure \ref{fch3}).

%%%%%%%%%%%%%%%%%%%%
\begin{figure}[!h]
	\centerline{\includegraphics[height=4.5cm]{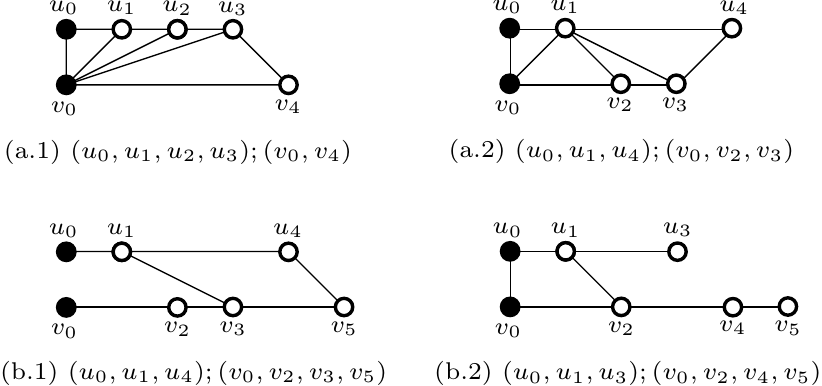}}
	\caption{Some embeddings in the plane.}
	\label{fch3}
\end{figure}
%%%%%%%%%%%%%%%%%%%

Finally, we prove that $G$ is an outerplanar graph, by showing that if we draw all the edges of $G$, then no two of them intersect.
Take $a,b \in \{0,1,\ldots,r\}$ and $c,d \in \{0,1,\ldots,s\}$ such that $a<b$ and $c<d$ 
and consider the vertices $u_{i_a}$, $u_{i_b}$, $v_{j_c}$ and $v_{j_d}$.
We distinguish six cases (see Figure \ref{fch1}):

\vspace{.2cm}\noindent{\bf Case 1:} If $i_a \le i_b<j_c<j_d$, then $u_{i_a}v_{j_d}\not\in E(G)$.

\vspace{.2cm}\noindent{\bf Case 2:} If $i_a \le j_c<i_b<j_d$, then $u_{i_a}v_{j_d}\not\in E(G)$.

\vspace{.2cm}\noindent{\bf Case 3:} If $i_a \le j_c<j_d<i_b$, then $v_{j_c}u_{i_b}\not\in E(G)$.

\vspace{.2cm}\noindent{\bf Case 4:} If $j_c \le j_d < i_a <i_b $, then $v_{j_c}u_{i_b}\not\in E(G)$.

\vspace{.2cm}\noindent{\bf Case 5:} If $j_c \le i_a < j_d <i_b $, then $v_{j_c}u_{i_b}\not\in E(G)$.

\vspace{.2cm}\noindent{\bf Case 6:} If $j_c \le i_a <i_b < j_d$, then $u_{i_a}v_{j_d}\not\in E(G)$.

%%%%%%%%%%%%%%%%%%%%
\begin{figure}[!h]
	\centerline{\includegraphics[height=4.8cm]{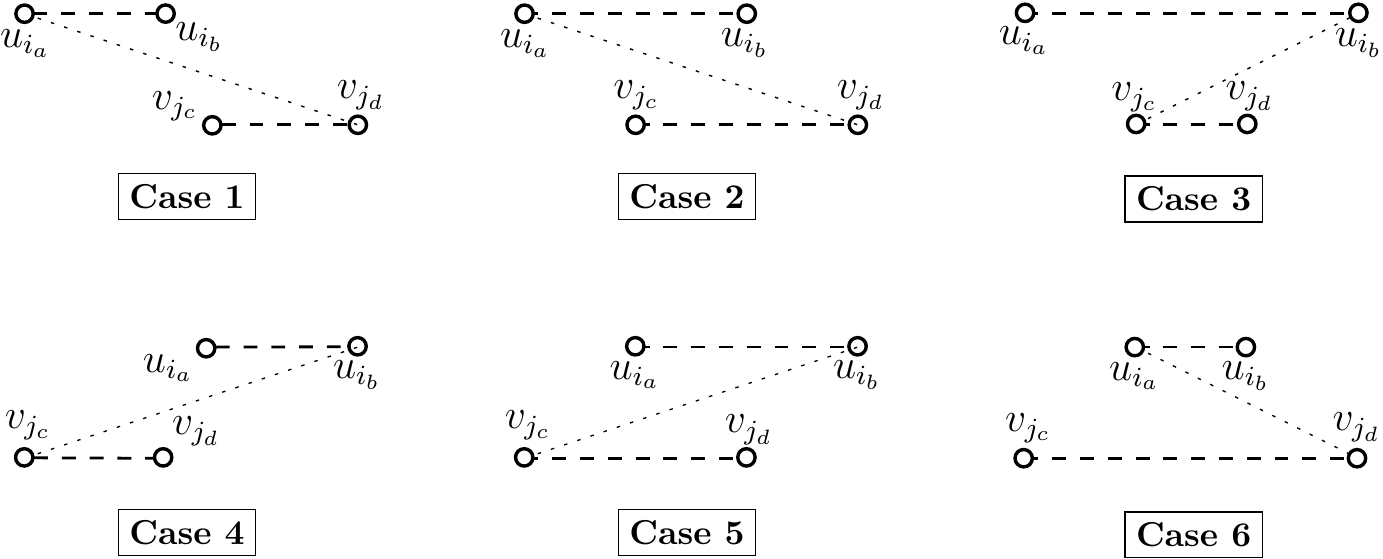}}
	\caption{In all cases, the dotted edge is not possible, as $G$ is outerplanar.}
	\label{fch1}
\end{figure}
%%%%%%%%%%%%%%%%%%%

\vspace{.3cm}\noindent ($\Leftarrow$):
Recall that a graph of order at least 5 is outerplanar if and only if  it contains neither $K_4$ nor $K_{2,3}$ as a minor.
Let $P_1$, $P_2$  two vertex-disjoint induced paths of $G$ such that $V(G=V(P_1)\cup V(P_2)=\{x_0,\ldots,x_r\}\cup\{y_0,\ldots,y_s\}$ and $E(P_1)\cup E(P_2)=\{x_0x_1,\ldots,x_{r-1}x_r,y_0y_1,\ldots y_{s-1}y_s\} \subsetneq E(G)$.

Next, we embed this graph in the plane as follows.
The  path $P_1$ is an horizontal segment being the left endpoint vertex $x_0$, and the  path $P_2$ is another  horizontal segment parallel to the first one whose left endpoint is vertex $y_0$.
Now, we draw all the edges joining vertices from both paths.
We call this drawing ${\cal D}_1$.
Assumme that no two edges cross in ${\cal D}_1$.
Then, it is a routine exercise to prove that the set $\{x_0,y_0\}$ is zero forcing set.

Suppose, on the contrary,  that there are four integers $i,j,h,k$ such that 
$0 \le  i < j \le  r$, $0  \le h < k \le s$, $x_iy_k, x_jy_h \in E(G)$. 
Then, we embed this graph as follows.
The path $P_1$ is an horizontal segment being the left endpoint vertex $x_0$, meanwhile path $P_2$ is another  horizontal segment paralel to the first one whose  left endpoint is vertex $y_s$.
Now, we draw all the edges joining vertices from both paths.
We call this drawing ${\cal D}_2$ (see Figure \ref{fig5}(a)).
Notice that in this second drawing,  edges $x_iy_k$, $x_jy_h$ do not cross.
We claim that in  ${\cal D}_2$ no two edges cross.
To prove this, we distinguish cases.

%%%%%%%%%%%%%%%%%%%
\begin{figure}[!h]
	\centerline{\includegraphics[height=9cm]{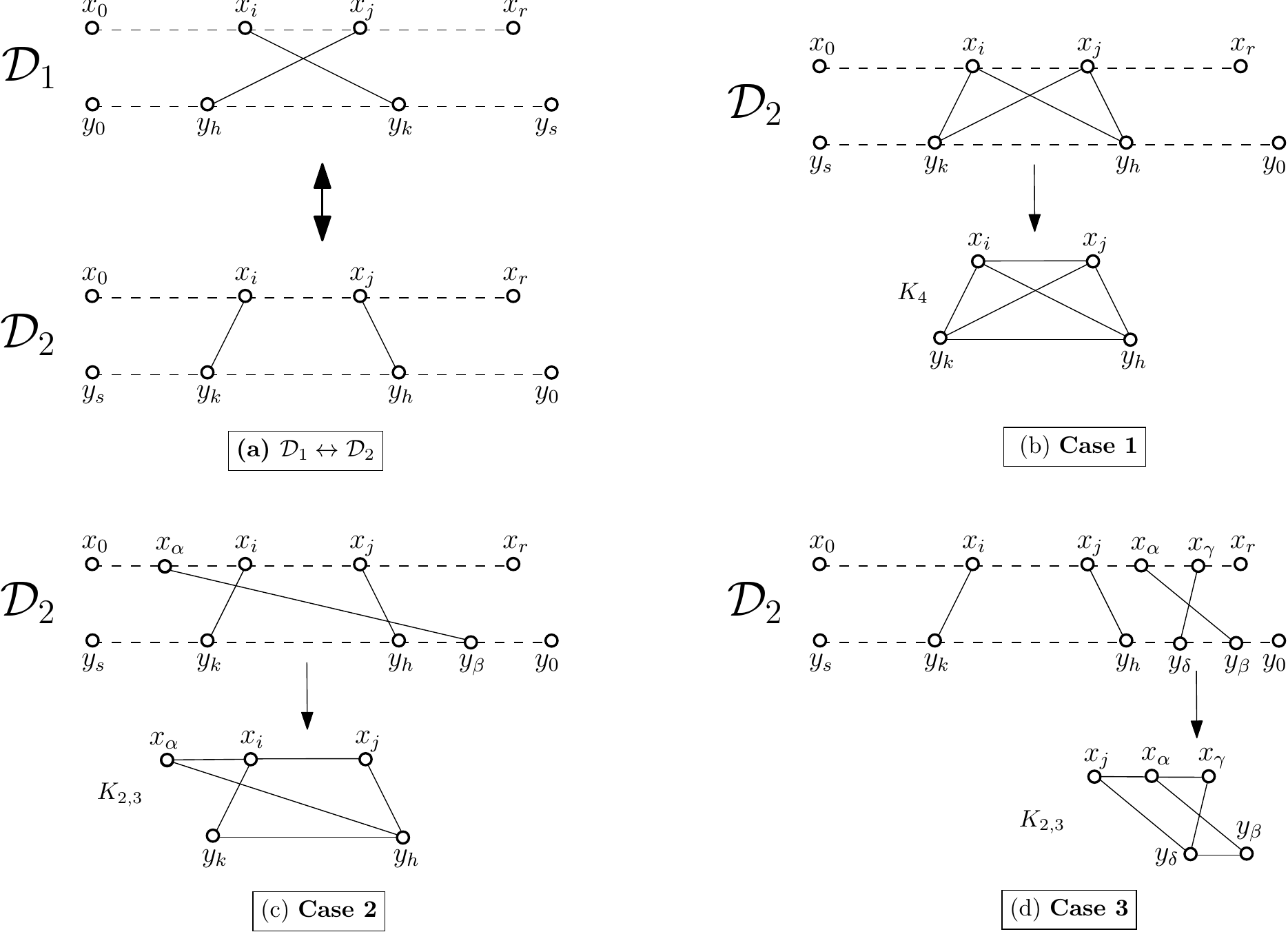}}
	\caption{${\cal D}_1$ and ${\cal D}_2$ are two different embeddings of $G$.}
	\label{fig5}
\end{figure}
%%%%%%%%%%%%%%%%%%

\vspace{.2cm}\noindent{\bf Case 1:} 
$x_iy_h, x_jy_k \in E(G)$ (see Figure \ref{fig5}(b)). 
Then, $K_{4}$ is a minor of $G$, a contradiction.

\vspace{.2cm}\noindent{\bf Case 2:} There is an edge $e$ crossing either edge $x_iy_k$ or edge $x_jy_h$ (see Figure \ref{fig5}(c)).
We can suppose w.l.o.g. that $e=x_{\alpha}y_{\beta}$, with $\alpha<i$ and $\beta < h$.
In this case,  $K_{2,3}$ is a minor of $G$, a contradiction.

\vspace{.2cm}\noindent{\bf Case 3:} 
There are two edges $e$ and $e'$, other than $x_iy_k$ and $x_jy_h$, crossing each other (see Figure \ref{fig5}(d)).
In this case,  $K_{2,3}$ is a minor of $G$, a contradiction.
%\vspace{.2cm}\noindent{\bf Case 3:} 
%For some $i\in \{0,\ldots,r-1\}$ and $j\in\{0,\ldots,s-1\}$, 
%$\{u_{i}w_{j+1},w_ju_{i+1}\}\subsetneq E(G)$, $|\{u_{i}w_{j},u_{i+1}w_{j+1}\} \cap E(G)| \le 1$ 
%and there are no more edges joining vertices of $V(P_1)$ to vertices of $V(P_2)$.
%Then, the set $\{u_0,u_r\}$ is a ZF-set of $G$.
%
%\vspace{.2cm}\noindent{\bf Case 4:} 
%For every $i\in \{0,\ldots,r-1\}$ and $j\in\{0,\ldots,s-1\}$,  $|\{u_{i}w_{j+1},w_ju_{i+1}\} \cap E(G)|\le 1$.
%Then, the set $\{u_0,w_0\}$ is a ZF-set of $G$.
\end{proof}
%%%%%%%%%%%%%%%%%%%%%

%\newpage
%\vspace{1.1cm}
%%%%%%%%%%%%%%%%%%%%%%%%%%%%%%%%%%%%%%%%%%%%%%%%%%%%%%%%%%%%%%%%%%%%%%%%%%%%%%%%%%%%%%%%%%%%%%%%%%
%%%%%%%%%%%%%%%%%%%%%%%%%%%%%%%%%%%%%%%%%%%%%%%%%%%%%%%%%%%%%%%%%%%%%%%%%%%%%%%%%%%%%%%%%%%%%%%%%%
%%%%%%%%%%%%%%%%%%%%%%%%%%%%%%%%%%%%%%%%%%%%%%%%%%%%%%%%%%%%%%%%%%%%%%%%%%%%%%%%%%%%%%%%%%%%%%%%%%
%%%%%%%%%%%%%%%%%%%%%%%%%%%%%%%%%%%%%%%%%%%%%%%%%%%%%%%%%%%%%%%%%%%%%%%%%%%%%%%%%%%%%%%%%%%%%%%%%%
\section{Power domination number}

Zero forcing is closely related to power domination, because power
domination can be described as a domination step followed by the zero forcing process or, equivalentely,  zero forcing can be described as power domination without the domination step.
In other words, the power domination process on a graph $G$ can be described
as choosing a set $S \subset V (G)$ and applying the zero forcing process to the closed neighbourhood $N[S]$ of $S$. 
The set $S$ is thus a power dominating set of $G$ if and only if $N[S]$ is a zero forcing set for $G$

%%%%%%%%%%%%%%%%%%%%%
\begin{defi}[\cite{hahehehe02}]{\rm
A subset of vertices $S$ of a graph $G$ is called a \emph{power dominating set} of $G$ if $cl(N[S])=V(G)$. }
\end{defi}
%%%%%%%%%%%%%%%%%%%%%

\noindent A \emph{minimum power dominating set}, a \emph{PD-set} for short,  is a power dominating set of minimum cardinality. 
The \emph{power dominating number} of $G$ , denoted by $\gp(G)$, is the cardinality of a PD-set.

%%%%%%%%%%%%%%%%%%%%%%%%%%%%%%%%%%%%%%%%%%%%%%%%%%%%%%%%%%%%%%%%%%%%%%%%%%%%%%%%%%%%%%%%%%%%%%%%%%
%%%%%%%%%%%%%%%%%%%%%%%%%%%%%%%%%%%%%%%%%%%%%%%%%%%%%%%%%%%%%%%%%%%%%%%%%%%%%%%%%%%%%%%%%%%%%%%%%%
\subsection{Basic Results}

\noindent As a straight consequence of these definitions,  it is derived both  that $\gp(G) \le Z(G)$ and $\gp(G) \le \gamma(G)$.
Moreover, this pair of inequalities along with Theorem \ref{mainzf}, allow us to derive the following results.

%%%%%%%%%%%%%%%%%%%%%%%
\begin{cor}
Let $G$ be a graph of order $n$.
\begin{itemize}
\item If $G$ is outerplanar and $P(G)=2$, then $\gp(G) \le 2$.
\item   $\Delta(G)=n-1$ if and only if $\gp(G) = \gamma(G)=1$.
\end{itemize}
\end{cor}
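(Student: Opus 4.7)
The plan is to deduce both items directly from the two definitional inequalities
$\gp(G) \le Z(G)$ and $\gp(G) \le \gamma(G)$. The first inequality holds because if $U$ is a zero forcing set of $G$, then $N[U] \supseteq U$, hence $cl(N[U]) \supseteq cl(U) = V(G)$, so $U$ itself is a power dominating set. The second inequality holds because if $D$ is a dominating set, then $N[D] = V(G)$ and already $cl(N[D]) = V(G)$, so $D$ is a power dominating set.

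For the first item, assume $G$ is outerplanar with $P(G) = 2$. If $n \ge 5$, Theorem \ref{mainzf} yields $Z(G) = 2$, and combining with $\gp(G) \le Z(G)$ gives $\gp(G) \le 2$. For $n \le 4$, the conclusion is immediate since choosing any two vertices of such a small graph gives a set $S$ with $N[S] = V(G)$, so $\gp(G) \le 2$ trivially; this handles the few outerplanar graphs with $P(G) = 2$ that lie outside the scope of Theorem \ref{mainzf}.

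For the second item, the forward implication uses that if $\Delta(G) = n-1$, then any vertex $v$ of maximum degree satisfies $N[v] = V(G)$, so $\{v\}$ is a dominating set, giving $\gamma(G) = 1$. Since $1 \le \gp(G) \le \gamma(G) = 1$, we conclude $\gp(G) = \gamma(G) = 1$. The backward implication only needs the hypothesis $\gamma(G) = 1$: if a single vertex $v$ dominates $G$, then $N[v] = V(G)$, hence $\deg(v) = n-1$ and therefore $\Delta(G) = n-1$.

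There is no substantial obstacle here, since both parts are essentially a packaging of the two definitional inequalities together with Theorem \ref{mainzf}. The only point that deserves explicit mention is the small-order case in item (i), which must be addressed because Theorem \ref{mainzf} is stated only for $n \ge 5$; once this is handled by the trivial observation above, the entire corollary is established.
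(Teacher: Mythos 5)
Your proposal is correct and follows exactly the route the paper intends: the corollary is stated there without proof as an immediate consequence of the inequalities $\gp(G)\le Z(G)$ and $\gp(G)\le\gamma(G)$ together with Theorem \ref{mainzf}, and you supply precisely those deductions (plus the small-order check that the paper silently omits). One tiny slip: for $n\le 4$ it is not true that \emph{any} two vertices dominate (take two adjacent end vertices of $P_4$); you should say \emph{some} two vertices, which always exist since every connected graph of order at most $4$ has a dominating set of size at most $2$ — the conclusion $\gp(G)\le 2$ is unaffected.
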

%%%%%%%%%%%%%%%%%%%%%%%

We end this section by presenting a first list of new and know results involving this parameter along with a Table containing  information of some basic graph families.

%%%%%%%%%%%%%%%%%%%%%%%%%%%%%%%%%%%%
\begin{prop}
If $G$ is a connected graph of order al most 5, then  $\gp(G)=1$.
Moreover,
\begin{itemize}
	\item The smallest connected graph $G$  such that $\gp(G)=2$ is the H-graph (see Figure \ref{HW8} (a)).
	\item The smallest connected graph $G$  with no twin vertices such that $\gp(G)=2$ is the Wagner graph (see Figure \ref{HW8} (b)).
	\end{itemize}
\end{prop}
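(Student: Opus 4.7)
The statement has three parts; the plan is to handle each using a common max-degree dichotomy, supplemented by direct case analysis on the critical small examples.

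For the uniform claim $\gp(G)=1$ when $|V(G)|\le 5$, I would split on $\Delta(G)$. If $\Delta(G)\ge n-2$, pick $v$ of maximum degree; then $|N[v]|\ge n-1$, so at most one vertex $w$ is white, and by connectivity $w$ has a neighbor $u\in N[v]$ whose only white neighbor is $w$, so $u$ forces $w$ and $\{v\}$ is a power dominating set. Otherwise $\Delta(G)\le n-3\le 2$, so $G$ is a path or a cycle, for which $\gp=1$ follows from a straight propagation starting at an endpoint or any vertex respectively. The same dichotomy also disposes of $n=6$ with $\Delta\ge 4$ and $n\le 7$ with $\Delta\ge n-2$, which will be reused below.

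For the H-graph $H$ with $V(H)=\{u_1,u_2,u_3,v_1,v_2,v_3\}$ and edges $u_1u_2,u_2u_3,v_1v_2,v_2v_3,u_2v_2$, I would show $\gp(H)\le 2$ via the PD-set $\{u_1,v_1\}$: once $N[\{u_1,v_1\}]=\{u_1,u_2,v_1,v_2\}$ is black, $u_2$ and $v_2$ each have a unique white neighbor ($u_3$ and $v_3$, respectively) and force it. For $\gp(H)\ge 2$, I would check the six singletons: each of $u_2,v_2$ leaves the opposite degree-$3$ vertex with two white neighbors, and each leaf has only one neighbor, which again retains two white neighbors. To conclude that $H$ is smallest, the first part covers $n\le 5$; for $n=6$ the max-degree dichotomy settles $\Delta\ge 4$ and $\Delta\le 2$, leaving the connected $6$-vertex graphs with $\Delta=3$ to enumerate by degree sequence and verify individually that every such graph distinct from $H$ admits a forcing singleton.

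For the Wagner graph $W$, I would exploit vertex-transitivity. A single vertex $v$ has $|N[v]|=4$ and, by the M\"obius-ladder chord pattern, each of its three neighbors retains two white neighbors, so no force applies and $\gp(W)\ge 2$. Taking an adjacent pair $\{v,v'\}$ colors six vertices, and then the chord neighbors of $v$ and $v'$ each have a unique white neighbor and force, giving $\gp(W)=2$. For minimality under the twin-free hypothesis, the reduction to $\Delta=3$ proceeds as before: the twin-free restriction on $n\le 7$ vertices eliminates $H$ and its natural enlargements (each of which carries twin leaves), and the remaining twin-free sub-cubic connected graphs on $6$ and $7$ vertices form a small, explicitly listable set, each of which admits a forcing singleton. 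The main obstacle I anticipate is precisely this $\Delta=3$ case analysis on $6$ and $7$ vertices: the structural freedom forces an ad-hoc verification. A useful heuristic is that $\gp(G)=2$ requires every singleton to produce a blocking configuration, namely a degree-$\ge 3$ vertex each of whose branches terminates before any vertex reaches a unique-white-neighbor state, and such blocks on at most $7$ vertices necessarily involve twin leaves, which the twin-free hypothesis rules out.
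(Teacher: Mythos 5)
The paper states this proposition without proof, so there is no in-paper argument to compare yours against; judged on its own, your treatment of the order-at-most-$5$ case and your computations $\gp(H)=2$ for the H-graph and $\gp(G)=2$ for the Wagner graph are correct. The gaps are all in the minimality claims, and one of them is not merely an unexecuted finite check but a step that would fail. You propose to finish the H-graph bullet by verifying that every connected $6$-vertex graph with $\Delta=3$ distinct from $H$ admits a forcing singleton. That is false: $K_{3,3}$ is connected, has $6$ vertices and $\Delta=3$, and $\gp(K_{3,3})=2$ (this is item (2) of the paper's own proposition on complete multipartite graphs, since $r_1=3$; concretely, $N[u]$ misses the two other vertices of $u$'s part, and every black vertex then has two white neighbours). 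So ``smallest'' has to be read as minimizing the number of edges as well as the order, and your enumeration must locate \emph{all} order-$6$ graphs with $\gp=2$ and then argue that $H$, being the unique such tree (the only $6$-vertex tree with two vertices of degree at least $3$), has the fewest edges among them.

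The Wagner bullet has a second, quieter gap: for $n=7$ your dichotomy $\Delta\ge n-2$ versus $\Delta\le 2$ leaves $\Delta\in\{3,4\}$, yet you only promise to enumerate \emph{sub-cubic} graphs on $6$ and $7$ vertices, so the twin-free connected $7$-vertex graphs with $\Delta=4=n-3$ are covered by neither branch. (The paper's later Lemma --- a vertex $u$ with $\deg(u)=n-3$ whose two non-neighbours are not twins yields a PD-singleton --- disposes of exactly this case under the twin-free hypothesis, but you do not invoke it.) Finally, your closing heuristic that any blocking configuration on at most $7$ vertices ``necessarily involves twin leaves'' is unsupported and is contradicted in spirit by $K_{3,3}$, whose twins are not leaves; the twin-free case analysis on $6$ and $7$ vertices therefore still has to be carried out explicitly rather than asserted.
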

%%%%%%%%%%%%%%%%%%%%%%%%%%%%%%%%%%%%

%%%%%%%%%%%%%%%%%%%%%%%%%%%%%%%%%%%%
\begin{figure}[!h]
	\centerline{\includegraphics[height=4cm]{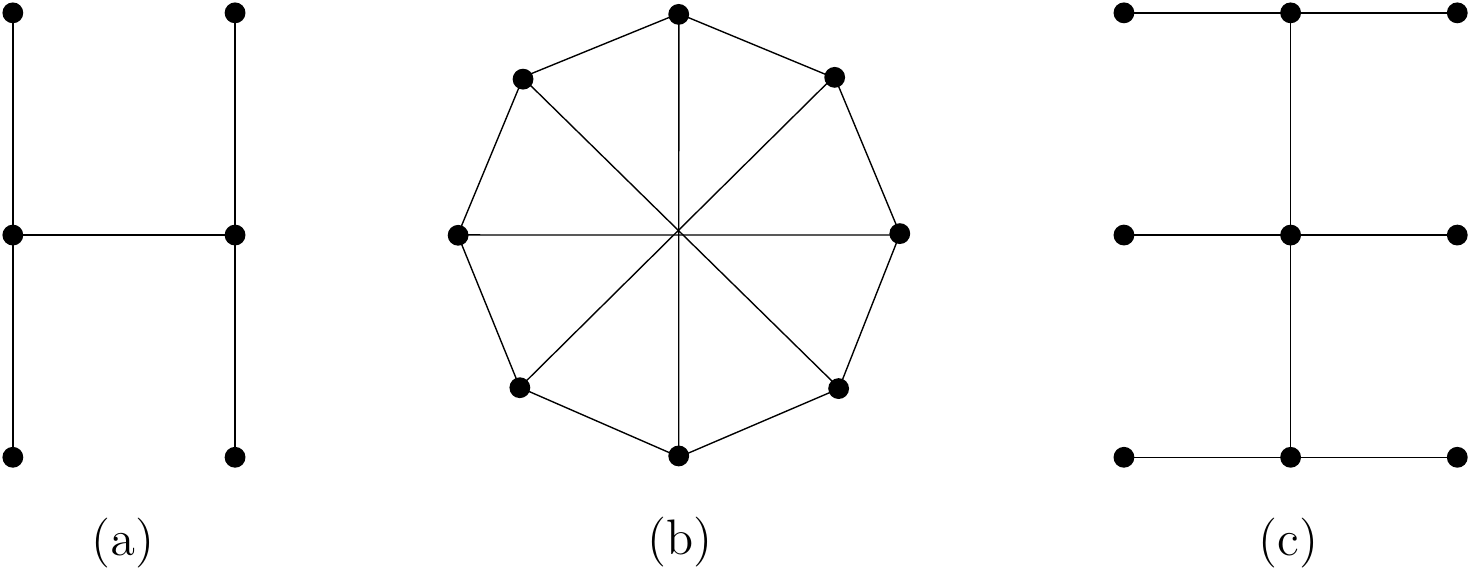}}
	\caption{Some small graphs}
	\label{HW8}
\end{figure}
%%%%%%%%%%%%%%%%%%%%%%%%%%%%%%%%%%%%

%%%%%%%%%%%%%%%%%%%%%%%%%%%%%%%%%%%%
\begin{table}[ht]
\begin{center}

\begin{tabular}{|c|cccccccc|} \hline
$G$   &   $P_n$     &  $C_n$   & $K_n$ &   $K_{1,n}$ &   $K_{2,n}$ &   $K_{h,n-h}$ &  $W_n$  & \\ \hline \hline
$\gp(G)$ & 1   &    1   &   1  &   1  &   1   &  2   & 1  &       \\ \hline
$\gamma(G)$ & {\scriptsize$\lfloor\frac{n+2}{3}\rfloor$} & {\scriptsize$\lfloor\frac{n+2}{3}\rfloor$} &    1   & $1$   & 2 & 2 & 1 &    \\  \hline
$Z(G)$      &  1 & 2 &    $n-1$   & $n-2$   & $n-2$ & $n-2$ & 3 &    \\  \hline

\end{tabular}
 \caption{Power domination, domination  and zero forcing numbers of some basic graph families.}
 \label{lio}
 \end{center}
\end{table}
%%%%%%%%%%%%%%%%%%%%%%%%%%%%%%%%%%%%%%

\newpage

%%%%%%%%%%%%%%%%%%%%%
\begin{prop}
	Let $G = K_{r_{1},\cdots,r_k}$ be the complete $k$-partite graph  with  $2\le k$ and  $1 \le r_{1} \leq r_{2} \leq \cdots \leq r_{k} $ and $V(G)=\cup_{i=1}^kV_i$.
	Let $G_e$ the graph obtained from $G$ by deleting an edge $ e=vw \in E(G)$.
	Then
	\begin{enumerate}[{\rm (1)}] 
	
	\item If $ r_{1} \leq 2 $, then $ \gp(G)=1 $. 
	
    \item If $ r_{1} \geq 3 $, then $ \gp(G)=2 $.
    
	\item If $r_1\le 2$, then $ \gp(G_e)=1$.
	
	\item If $r_1=3$, then 
	$\gp(G_e)=\left\{\begin{array}{lll}
    1, & {\rm if} & \{v,w\}\cap V_1\neq \emptyset \\
      
    2, &  & otherwise. \\
\end{array}\right.$

	\item If $4\le r_1$, then $ \gp(G_e)=2$.
	
	\end{enumerate}
\end{prop}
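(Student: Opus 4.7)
The plan is to handle the five items by a uniform strategy: for every upper bound, exhibit an explicit power dominating set of the stated size, and for each instance of $\gp = 2$, rule out every candidate singleton by a direct forcing analysis. The structural fact I will lean on is that for $u \in V_i$ the closed neighbourhood $N[u]$ in $G$ equals $V(G) \setminus (V_i \setminus \{u\})$, and in $G_e$ either coincides with that set or differs from it by one vertex, according to whether $u$ is an endpoint of $e$. Hence the white set after the domination step lies essentially inside a single part, and a force can be triggered only through a black vertex whose white-neighbour count has been cut down to one; this count differs from $r_j - 1$ only for vertices incident to $e$.

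For items (1)--(2): if $r_1 \le 2$, take $u \in V_1$; either $N[u] = V(G)$ (when $r_1 = 1$) or the unique white $V_1 \setminus \{u\}$ is forced by any element of $V_2 \cup \cdots \cup V_k$, so $\gp(G) = 1$. If $r_1 \ge 3$, take $u_1 \in V_1$ and $u_2 \in V_2$; since these are in different parts, $N[\{u_1,u_2\}] = V(G)$, giving $\gp(G) \le 2$. For the matching lower bound, a singleton $\{u\} \subseteq V_j$ leaves $r_j - 1 \ge r_1 - 1 \ge 2$ whites in $V_j$, all simultaneously adjacent to every black vertex outside $V_j$, so the forcing rule is frozen.

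For items (3)--(5) the upper bound $\gp(G_e) \le 2$ is immediate from the pair $\{u_1,u_2\}$ with $u_1 \in V_1$, $u_2 \in V_2$, which (as a straightforward union of complements shows) dominates $G_e$ even if $e = u_1 u_2$. When $r_1 \le 2$ (item (3)) I would pick $u \in V_1$, choosing the element not incident to $e$ if $r_1 = 2$, and verify that the one or two residual whites are forced by vertices whose adjacencies are untouched by the deletion of $e$. When $r_1 = 3$ and $\{v,w\} \cap V_1 \ne \emptyset$ (first branch of (4)) I would take $u \in V_1 \setminus \{v,w\}$; after $N[u]$ the endpoint of $e$ outside $V_1$ has had its adjacency to the other endpoint of $e$ removed, leaving it with exactly one white neighbour in $V_1$, which triggers a first force; the remaining white vertex in $V_1$ is then forced by any vertex in an external part.

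The main obstacle is the lower bound $\gp(G_e) \ge 2$ in the second branch of (4) and in (5). My plan is to dispose of every candidate single-vertex PDS $\{u\}$ by partitioning on the position of $u$: if $u \in V_1$, then because the endpoints of $e$ lie outside $V_1$ in the relevant subcases, every external vertex retains all of $V_1$ as neighbours and sees the full set $V_1 \setminus \{u\}$ as whites, of size $\ge 2$, blocking any force. If $u \in V_j$ with $j \ge 2$, then the only black vertices whose white-neighbour count can fall below $r_j - 1$ are $v$ and $w$, and each of them loses at most one adjacency from the deletion of $e$; tracking $r_j - 1$ and $r_j - 2$ against the hypothesis $r_1 \ge 4$ forces every black vertex to see at least two whites. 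The delicate sub-cases are $u \in \{v,w\}$ and the configurations in (4) where $u$ is in the same part as one of $v, w$; I expect the careful bookkeeping of these boundary situations to be the hardest part of the proof.
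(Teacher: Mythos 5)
Your overall strategy---explicit (power) dominating sets for the upper bounds, and a count of white neighbours to freeze the forcing rule for the lower bounds---is exactly the paper's, and your treatment of items (1), (2), (3), (5) and the first branch of (4) matches the published proof essentially step for step (the paper likewise takes the vertex $v'\in V_1$ not incident to $e$, lets $w$ force $v''$, and then lets an external vertex force $v$).

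The genuine gap is the lower bound in the second branch of (4). Your counting argument is calibrated ``against the hypothesis $r_1\ge 4$'', but in item (4) we have $r_1=3$, and the boundary situation you flag and postpone---$u$ lying in the same part as $v$ or $w$---is precisely where the argument breaks, not merely where it gets delicate. Concretely, suppose $v\in V_a$ with $r_a=3$, say $V_a=\{v,x,y\}$, and $w\in V_b$. Then $N[x]$ leaves exactly $\{v,y\}$ white; since the edge $vw$ has been deleted, $w$ has the single white neighbour $y$, so $w\rightarrow y$, after which any vertex of $V_b\setminus\{w\}$ forces $v$. Hence $\{x\}$ is a power dominating set and $\gp(G_e)=1$ even though $\{v,w\}\cap V_1=\emptyset$: take $G=K_{3,3,3}$ with $e$ joining $V_2$ and $V_3$. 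So the dichotomy in (4) is really governed by whether $e$ meets \emph{some} part of minimum cardinality $3$, not by whether it meets the particular part labelled $V_1$, and no bookkeeping can close the case as literally stated because the claim is false there. For what it is worth, the paper's own proof of this branch consists of the words ``proceed as in item (2)'' and overlooks the same phenomenon (the deleted edge can lower a white-neighbour count from $r_j-1=2$ to $1$ and start a cascade), so your proposal is no weaker than the published argument---but the step you defer is where the statement itself needs repairing, not just the proof.
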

\begin{proof}
\begin{enumerate}[{\rm (1)}] 
	
	\item Take $v_1\in V_1$. 
	Notice that $N[v_1]=V(G)\setminus [V_1-v_1]$.
	If $r_1=1$, then $\{v_1\}$ is a dominating set of $G$, i.e., $\gp(G)=1$.
	Suppose that $r_1=2$ and $V_1=\{v_1,v_1'\}$.
	Then, for any vertex $u\not\in V_1$, $u \rightarrow v_1'$, which means that $\gp(G)=1$, as $N[v_1]=V(G)\setminus \{v_1'\}$.
	
	\item For every $u\in V_i$, $cl(\{w\})=V(G)\setminus [V_i-u]$.
	Thus, $\gp(G)\ge2$.
	Take $S=\{v_1,v_2\}$, where $v_1\in V_1$ and $v_2\in V_2$.
	Notice that $N[S]=V(G)$.
	Hence, $\gp(G)=\gamma(G)=2$.

	\item If $\{v,w\}\cap V_1=\emptyset$, then proceed as in item (1).
	Suppose w.l.o.g. that $v\in V_1$.
	Notice that $N[v]=V(G)\setminus [(V_1-v)\cup \{w\}]$.
	If $r_1=1$, then for every $u\not\in\{v,w\}$, $u\rightarrow w$.
	Thus, $\gp(G_e)=1$.
	Otherwise, suppose that $r_1=2$ and $V_1=\{v,v'\}$.
	Then, $N[v']=V(G)-v$ and for any vertex $u\not\in \{v,v',w\}$, $u \rightarrow v$.
	Hence, $\gp(G_e)=1$.
	
	\item If $\{v,w\}\cap V_1=\emptyset$, then proceed as in item (2).
	Otherwise, suppose w.l.o.g. that $v\in V_1$ and $V_1=\{v,v',v''\}$.
	Notice that $N[v']=V(G)\setminus \{v,v''\}$.
	Next, observe that $w \rightarrow v''$ and for any vertex  $u\not\in \{v,v',v'',w\}$,   $u  \rightarrow v$.
	Hence, $\gp(G_e)=1$.

	\item Notice that, for every $u\in V(G)$, $cl(u)=N[u]$ and $|N[u]|\le n-3$.
	Thus, $\gp(G)\ge2$.
	Moreover, for every pair of vertices $\{u_1,u_2\}$ such that $\{u_1,u_2\}\cap\{v,w\}=\emptyset$, $N[\{u_1,u_2\}]=V(G)$.
	Hence, $\gp(G_e)=\gamma(G_e)=2$.
	
\end{enumerate}
\vspace{-.4cm}\end{proof}
%%%%%%%%%%%%%%%%%%%%%

\vspace{.4cm}
A tree is called a \emph{spider} if it has a unique vertex of degree greater than 2.  
We define the \emph{spider number}  of a tree $T$,  denoted  by $sp(T )$, to be the minimum number of subsets into which $V (T )$ can be partitioned so that each subset induces a spider. 
%We call such a partition a \emph{spider partition} and each set of the partition a \emph{spider subset}.

\vspace{.2cm}
%%%%%%%%%%%%%%%%%%%%%%%%%%%%
\begin{theorem} [\cite{hahehehe02}]\label{sp}
	For any tree $T$,  $\gp(T) = sp(T)$.
\end{theorem}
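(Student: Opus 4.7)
The plan is to establish both inequalities $\gp(T)\le sp(T)$ and $sp(T)\le \gp(T)$, most cleanly by constructing an explicit correspondence between spider partitions of $T$ and power dominating sets of $T$ (with a short induction on $|V(T)|$ handling the base cases where $T$ itself is a spider and both quantities equal $1$).

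For the first inequality $\gp(T)\le sp(T)$, I would start from a minimum spider partition $V_1,\ldots,V_k$ of $V(T)$ with $k=sp(T)$. From each part $V_i$ I would select the unique vertex $v_i$ of degree at least three in $T[V_i]$---the head of the spider---or, if $T[V_i]$ is a path, an endpoint of that path. The claim is that $S=\{v_1,\ldots,v_k\}$ satisfies $cl(N[S])=V(T)$. After the initial domination step, every leg of each $T[V_i]$ has its first vertex already black, and each subsequent vertex on the leg has degree exactly two in $T[V_i]$, so in principle its only white neighbor within its own part is the next leg vertex; the forcing then walks down each leg until the whole tree is absorbed.

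For the reverse inequality $sp(T)\le \gp(T)$, I would take a minimum PD-set $S=\{s_1,\ldots,s_k\}$ with $k=\gp(T)$, fix a chronological list of forces beginning from $N[S]$, and read off the resulting vertex-disjoint forcing chains. Each chain is a path in $T$ whose initial vertex lies in $N[S]$. Grouping the chains by their starting neighborhoods, I would set $V_i=\{s_i\}\cup\bigcup\{\text{chains starting in }N(s_i)\}$, breaking ties arbitrarily when the neighborhoods of distinct $s_i$'s overlap. This would yield a partition of $V(T)$ into $k$ parts in which $s_i$ plays the role of head of $T[V_i]$ and the forcing chains form its legs.

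The main obstacle in both directions is coupling the local spider structure to the global forcing process. In the upper bound, one must show that a cross-edge between two distinct parts $V_i$ and $V_j$ does not stall the forcing: an interior leg vertex $u$ could a priori have a neighbor in another part that is still white at the moment $u$ is asked to force. This is exactly where the tree hypothesis is indispensable, since in a tree the edges between two induced subtrees are severely constrained, and an ordering-of-legs argument (or an induction that peels off a pendant spider) rules out the bad case. In the lower bound, the delicate step is verifying that each $T[V_i]$ is genuinely a spider---that no vertex other than $s_i$ attains degree at least three in $T[V_i]$. This again follows from the tree property: the chains are vertex-disjoint paths, and any extra edge between two chains, or between a chain and another $s_j$'s territory, would close a cycle in $T$. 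A last technicality is to treat paths uniformly as degenerate spiders so that the correspondence is clean in both directions.
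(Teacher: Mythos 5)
The paper does not prove this statement: it is imported verbatim from \cite{hahehehe02}, so there is no internal proof to compare against. Measured against the standard argument in that source, your plan has the right architecture: both inequalities, with spider partitions converted to power dominating sets by selecting heads, and power dominating sets converted to spider partitions via forcing chains. Your lower-bound direction ($sp(T)\le\gp(T)$) is essentially complete and correct: the forcing chains starting from $N[S]$ partition $V(T)$ into induced paths, each beginning at a vertex of $N[S]$; assigning each chain to an $s_i$ whose closed neighborhood contains its start yields parts in which only $s_i$ can have degree $\ge 3$, since any further adjacency between two chains hanging off the same $s_i$, or between a chain and a non-initial vertex, would close a cycle in $T$. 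Two small repairs: $V_i$ must also absorb the tail of the chain that starts at $s_i$ itself (not only chains starting in $N(s_i)$), and you should fix the convention (as you do at the end) that paths and single vertices count as degenerate spiders, since the paper's literal definition of spider excludes them.

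The genuine gap is in the upper bound, exactly at the point you flag and then defer. That a leg vertex ``could a priori have a neighbor in another part that is still white'' is not a two-part local problem: a leg of $V_i$ can stall at a black vertex waiting for a white cross-neighbor in $V_j$, whose own leg stalls waiting on $V_k$, and so on, so the fact that at most one edge of $T$ joins two given parts does not by itself rule out a deadlock. The missing argument is a global acyclicity claim, and it can be made precise as follows. There are exactly $k-1$ cross edges (each part induces a subtree, so $\sum_i(|V_i|-1)=n-k$ edges are internal). Suppose the process halts with a nonempty white set; on each leg take the first white vertex; its black predecessor must have a second white neighbor, necessarily a white cross-neighbor $w$ in another part; $w$ lies at depth $\ge 2$ on some leg of its part, so walk back along that leg to the first white vertex there and repeat. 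This produces an infinite sequence of oriented cross edges (black endpoint to white endpoint), consecutive ones distinct, joined by paths inside parts; a first repetition yields a closed walk in $T$ traversing some cross edge exactly once, hence a cycle --- a contradiction. With that paragraph supplied (and it shows the heads of \emph{any} spider partition, minimum or not, form a power dominating set), your proof is complete; without it, the upper bound is only a plausible plan.
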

%%%%%%%%%%%%%%%%%%%%%%%%%%%%

%%%%%%%%%%%%%%%%%%%%%%%%%%%%
\begin{cor} %[\cite{hahehehe02}]\label{1gpg}
	For any tree $T$, $\gp(T) = 1$ if and only if $T$ is a spider.
\end{cor}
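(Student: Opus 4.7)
The plan is to derive this as an immediate consequence of Theorem \ref{sp}, which tells us that $\gp(T) = sp(T)$ for every tree. So the task reduces to proving that $sp(T) = 1$ if and only if $T$ is a spider, which is essentially a tautology given the definition of the spider number, and the proof will be just an unpacking of definitions in both directions.

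For the forward implication, I would assume $\gp(T) = 1$. By Theorem \ref{sp}, $sp(T) = 1$, meaning $V(T)$ admits a partition into a single subset whose induced subgraph is a spider. Since the only such partition is $\{V(T)\}$ itself, the induced subgraph $T[V(T)] = T$ is a spider.

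For the reverse implication, I would assume $T$ is a spider. Then the trivial partition $\{V(T)\}$ witnesses that $V(T)$ can be split into a single subset inducing a spider, so $sp(T) \le 1$. Since any partition of a nonempty vertex set has at least one block, $sp(T) \ge 1$, and hence $sp(T) = 1$. Theorem \ref{sp} then gives $\gp(T) = 1$.

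There is essentially no obstacle here beyond ensuring the convention on what counts as a spider is consistent with the table (in particular that a path $P_n$, being a tree with at most one vertex of degree exceeding 2, is considered a spider, as is the star $S_n$ listed earlier under the notation $S_n \cong K_{1,n-1}$); both directions then follow immediately from Theorem \ref{sp} and the definition of $sp(T)$, so the proof can be written in just a couple of lines.
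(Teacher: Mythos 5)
Your proposal is correct and is exactly the intended derivation: the paper states this as an immediate corollary of Theorem \ref{sp} with no written proof, and unpacking $sp(T)=1$ via the trivial partition is all that is needed. Your side remark about the convention (that paths must count as spiders, i.e.\ ``at most one'' rather than ``exactly one'' vertex of degree greater than $2$) is a fair observation about the paper's stated definition, but it does not affect the validity of the argument.
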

%%%%%%%%%%%%%%%%%%%%%%%%%%%%

%%%%%%%%%%%%%%%%%%%%%%%%%%%%
\begin{theorem}[\cite{zhka07}]
If $G$ is a planar (resp. outerplanar) graph of diameter at most 2 (resp. at most 3), then $\gp(G)\le 2$ (resp. $\gp(G)=1$).
\end{theorem}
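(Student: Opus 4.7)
The plan is to handle the two implications separately.

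For the outerplanar, diameter-$3$ case, I would choose an arbitrary vertex $v$ and attempt to show that $\{v\}$ is a power dominating set, i.e., that $N[v]$ is a zero forcing set. Writing $A = N(v)$ and $B = V(G) \setminus N[v]$, the diameter hypothesis forces every $b \in B$ to lie at distance $2$ or $3$ from $v$, and in particular within distance $2$ of $A$. Outerplanarity forbids both $K_4$ and $K_{2,3}$ as minors; combined with the fact that $v$ is adjacent to every vertex of $A$, no $b \in B$ can have three neighbors in $A$, for otherwise $v$, $b$, and three such neighbors would realize a $K_{2,3}$ minor. I would fix an outerplanar embedding of $G$ and linearly order the vertices along the outer face; a peeling argument starting from the vertex of $B$ closest to $v$ on the outer boundary should show that at each step some already-colored vertex has a unique uncolored neighbor, so the forcing proceeds until $\mathrm{cl}(N[v]) = V(G)$.

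For the planar, diameter-$2$ case, I would first dispose of $\Delta(G) = n-1$, which immediately yields $\gp(G) = 1$ by the corollary above. Otherwise, picking a vertex $v$ of maximum degree and letting $A = N(v)$ and $B = V \setminus N[v]$, the diameter hypothesis guarantees that every $b \in B$ has a neighbor in $A$. The plan is to select a second vertex $u$ (in $A$ or in $B$) such that $N[\{u,v\}]$ is a zero forcing set. Planarity yields the bipartite bound $|E(A,B)| \le 2(|A|+|B|) - 4$, and $K_{3,3}$-minor-freeness prevents too many triples of vertices of $B$ from sharing three common neighbors in $A$. Combining these, one should be able to pick $u$ so that, after the initial domination step colors $N[\{u,v\}]$, each vertex of $A$ eventually has at most one uncolored neighbor in $B$, and the zero forcing process then finishes the job.

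The main obstacle in both parts is the chronological bookkeeping: demonstrating that the forcing process never stalls. Concretely, one must rule out configurations in which many vertices of $A$ simultaneously retain two or more uncolored neighbors in $B$. This is where the forbidden-minor characterization of (outer)planarity is essential; converting those excluded minors into a concrete order of forces, guided by a chosen planar embedding, will be the delicate part of the argument, particularly when $B$ contains vertices at the farthest distance from $v$ whose only access is via other vertices of $B$.
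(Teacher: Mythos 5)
The paper does not actually prove this theorem: it is imported from \cite{zhka07} and stated without proof, so there is no in-paper argument to compare your attempt against; your proposal has to stand on its own, and it does not.

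First, the outerplanar half of the statement you are trying to prove is false as transcribed, so no proof of it can succeed: the H-graph of Figure \ref{HW8}(a) (an edge $uv$ with two pendant leaves at each of $u$ and $v$) is an outerplanar graph of diameter $3$ with $\gp = 2$ --- the paper itself exhibits it as the smallest graph with power domination number $2$. The hypothesis in the original reference must be diameter at most $2$ for the outerplanar case. Second, even for the corrected statement, your plan of showing that an \emph{arbitrary} vertex $v$ is a power dominating set cannot work: in the paw (a triangle $abc$ with a pendant leaf $a'$ at $a$), an outerplanar graph of diameter $2$, the set $N[a']=\{a,a'\}$ stalls immediately because $a$ has two white neighbours, so $\{a'\}$ is not a power dominating set although $\{a\}$ is. One must argue for a carefully chosen vertex, and the choice is part of the work. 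Third, and most importantly, the decisive step in both halves --- that the forcing process, started from the right vertex or pair, never stalls --- is precisely what you defer (``a peeling argument \ldots should show'', ``one should be able to pick $u$''). The observations you do make are correct but insufficient: excluding $K_{2,3}$ shows each vertex of $B$ has at most two neighbours in $A$, and planarity bounds $|E(A,B)|$, but neither fact by itself yields an order of forces in which some black vertex always has a unique white neighbour; ruling out the stalled configurations is the entire content of the theorem. As written, this is an outline with the central argument missing rather than a proof.
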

%%%%%%%%%%%%%%%%%%%%%%%%%%%

\newpage
%%%%%%%%%%%%%%%%%%%%%%%%%%%%%%%%%%%%%%%%%%%%%%%%%%%%%%%%%%%%%%%%%%%%%%%%%%%%%%%%%%%%%%%%%%%%%%%%%%
%%%%%%%%%%%%%%%%%%%%%%%%%%%%%%%%%%%%%%%%%%%%%%%%%%%%%%%%%%%%%%%%%%%%%%%%%%%%%%%%%%%%%%%%%%%%%%%%%%
\subsection{Graphs with large maximum degree}\label{md}

%%%%%%%%%%%%%%%%%%%%%
\begin{prop}\label{gpn-3}
Let $G$ a graph of order $n$ and maximum degree $\Delta$.
\begin{enumerate}[{\rm (1)}]

\item If $n-2\le \Delta \le n-1$, then  $\gp(G)=1$.

\item If $n-4\le \Delta \le n-3$, then  $1 \le \gp(G) \le 2$.

\end{enumerate}
 
\end{prop}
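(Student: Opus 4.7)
The overall strategy is to fix a vertex $v$ with $\deg(v)=\Delta$ and let $U=V(G)\setminus N[v]$, so $|U|=n-1-\Delta\in\{0,1,2,3\}$; in each case I will exhibit a PD-set of the prescribed size consisting of $v$ together with at most one additional carefully chosen vertex, and verify that the zero-forcing process completes from $N[S]$.

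For part~(1), if $\Delta=n-1$ then $\{v\}$ is already a dominating set, while if $\Delta=n-2$ and $U=\{u\}$, connectivity provides a neighbor of $u$ in $N(v)$ which is black and has $u$ as its unique white neighbor, so $u$ is forced and $\gp(G)=1$. For part~(2) the lower bound is trivial; when $\Delta=n-3$ and $U=\{u_1,u_2\}$, the set $S=\{v,u_1\}$ satisfies $N[S]\supseteq V(G)\setminus\{u_2\}$, and $u_2$ is forced by any of its neighbors in $N(v)$ (one exists by connectivity, whether or not $u_1u_2\in E(G)$).

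The substantive work lies in $\Delta=n-4$, where $U=\{u_1,u_2,u_3\}$. Let $H=G[U]$ and split on its structure. If some $u_i$ is adjacent in $H$ to both others, then $S=\{v,u_i\}$ already dominates $V(G)$. If $H$ has a single edge $u_ju_k$ and the third vertex $u_i$ is isolated in $H$, take $S=\{v,u_j\}$: then $N[S]=V(G)\setminus\{u_i\}$, and since $u_i$'s neighbors all lie in $N(v)$, any such neighbor forces $u_i$. If instead $H$ is edgeless, write $N_i:=N(u_i)\subseteq N(v)$. When the three $N_i$ are not all equal, I can pick an index $i$ for which the remaining neighborhoods $N_j,N_k$ differ; with $S=\{v,u_i\}$, any vertex $w\in N_j\triangle N_k$ is a black vertex with exactly one white neighbor among $\{u_j,u_k\}$, forces that twin, and the surviving white vertex is then forced in turn.

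The main obstacle is the residual subcase in which $H$ is edgeless and $N_1=N_2=N_3=:N$, so $u_1,u_2,u_3$ are mutual twins and no set of the form $\{v,u_i\}$ can ever zero-force the other two (every black vertex seeing one twin sees all three). Here I will abandon the forcing step and go for outright domination: pick any $a\in N$ (nonempty by connectivity). Since $a\in N(v)$ and $a\in N(u_j)$ for each $j$, one has $N[v]\cup N[a]\supseteq\{v\}\cup N(v)\cup\{u_1,u_2,u_3\}=V(G)$, so $S=\{v,a\}$ is a dominating set and therefore a PD-set, completing the proof that $\gp(G)\le 2$.
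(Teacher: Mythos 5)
Your proof is correct, and its skeleton coincides with the paper's: fix a vertex $v$ of maximum degree, observe that at most three vertices lie outside $N[v]$, and do a case analysis on the edges among them. The cases $\Delta\ge n-3$ and the case where the leftover set spans at least one edge are handled essentially as in the paper. The one place where you genuinely diverge is the hardest subcase, where the three leftover vertices form an independent set. The paper picks, for each leftover vertex $w_i$, a neighbour $v_i\in N(u)$ and branches on whether every chosen $v_i$ sees only its own $w_i$ (in which case $\{u\}$ alone already power dominates) or some $v_i$ sees at least two of them (in which case $\{u,v_1\}$ works). You instead branch on whether the three neighbourhoods $N_1,N_2,N_3$ coincide: if not, a vertex in a symmetric difference $N_j\triangle N_k$ performs the first force from $S=\{v,u_i\}$; if they do coincide, the leftover vertices are mutual twins and $\{v,a\}$ with $a\in N$ is outright dominating. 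Your dichotomy is arguably cleaner — it does not depend on an arbitrary choice of the $v_i$, and it isolates the true obstruction (mutual twins outside $N[v]$, which is exactly why no singleton extension by a leftover vertex can force) — while the paper's version has the side benefit of identifying a subcase where $\gp(G)=1$ rather than merely $\le 2$. (Incidentally, the paper's phrase ``$\{u\}$ is a dominating set'' in that subcase is a slip: $\{u\}$ is only a power dominating set there, as the $w_i$ are forced rather than dominated.)
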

\begin{proof} Let $u$ a vertex such that $deg(u)=\Delta$, that is, such that $|N[u]|=\Delta+1$.

\vspace{.2cm}{\rm (1)} If $\Delta = n-1$, then $1 \le  \gp(G) \le \gamma(G)=1$, which means that $\gp(G)=1$.
Let $u$ a vertex such that $deg(u)=\Delta$, that is, such that $|N[u]|=\Delta+1$.
If $\Delta = n-2$, then $|N[u]|=n-1$, i.e., there exists  a vertex $w$ such that $V(G)\setminus N[u]=\{w\}$.
Thus, for some vertex $v \in N(u)$, $v \rightarrow w$, which means that $\{u\}$ is a PD-set.

\vspace{.2cm}{\rm (2)} 
Suppose that $\Delta = n-3$.
Let $w_1,w_2\in V(G)$ such that $V(G)\setminus N[u]=\{w_1,w_2\}$.
Take the set $S=\{u,w_1\}$.
If $w_1w_2\in E(G)$, then  $S$ is a dominating set of $G$, and thus it is a power dominating set.
If $w_1w_2\not\in E(G)$, then $N[S]=V(G)\setminus \{w_2\}$.
Hence, $S$ is a power dominating set since for some vertex $v \in N(u)$, $v \rightarrow w_2$.

Finally, assume that $\Delta = n-4$.
Let $w_1,w_2,w_3\in V(G)$ such that $V(G)\setminus N[u]=\{w_1,w_2,w_3\}$.
We distinguish cases.

\vspace{.2cm}\noindent{\bf Case 1:} 
$G[S]$ is not the empty graph $\overline{K}_3$.
Suppose w.l.o.g. that $w_1w_2\in E(G)$.
Take the set $S=\{u,w_1\}$.
If $w_1w_3\in E(G)$, then  $S$ is a dominating set of $G$, and thus it is a power dominating set.
If $w_1w_3\not\in E(G)$, then $N[S]=V(G)\setminus \{w_3\}$.
Hence, $S$ is a power dominating set since either $w_2 \rightarrow w_3$ or, for some vertex $v \in N(u)$, $v \rightarrow w_3$.

\vspace{.2cm}\noindent{\bf Case 2:} 
$G[S]$ is  the empty graph $\overline{K}_3$.
For $i\in \{1,2,3\}$, let $v_i\in N(u)$ be such that $v_iw_i\in E(G)$.
If for every $i\in\{1,2,3\}$, $N(v_i)\cap \{w_1,w_2,w_3\}=\{w_i\}$,  then  $\{u\}$ is a dominating set of $G$, and thus it is a power dominating set.
If for some  $i\in\{1,2,3\}$, $|N(v_i)\cap \{w_1,w_2,w_3\}|\ge 2$, assume w.l.o.g. that $i=1$.
In this case, $S=\{u,v_1\}$ is a power dominating set since 
$V(G)\setminus \{w_3\} \subseteq N[S]$ and either $v_1 \rightarrow w_3$ or  $v_3 \rightarrow w_3$.
\end{proof}
%%%%%%%%%%%%%%%%%%%%%

There are graphs with maximum degree $\Delta=n-5$ such that $\gp(G)\ge3$. 
The simplest example is shown in Figure \ref{HW8} (c).

%%%%%%%%%%%%%%%%%%%%%%%%%%%%
\begin{lemma}
	Let $G$ be a  graph of order $n\ge4$.
	Let $u,w_1,w_2 \in V(G)$ such that $deg(u)=n-3$ and $V(G)= N[u] \cup \{w_1,w_2\}$
	Then, $\{u\}$ is a PD-set if and only if  $w_1$ and $w_2$ are not twins.
\end{lemma}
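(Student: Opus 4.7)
The plan is to analyze the zero forcing process initiated from $N[\{u\}] = V(G)\setminus\{w_1,w_2\}$. After the initial domination step the white vertices are exactly $w_1$ and $w_2$, so $\{u\}$ is a PD-set precisely when the color-change rule can be applied to paint both $w_1$ and $w_2$ black.

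First I would identify which vertices can perform an initial force. The vertices $w_1,w_2$ are white and thus cannot force, while $u$ has no neighbor in $\{w_1,w_2\}$ and so cannot force either. Hence any initial force must be performed by some $v\in N(u)$, and such a $v$ forces one of $w_1,w_2$ exactly when it is adjacent to exactly one of them. In other words, an initial force is available if and only if some vertex of $N(u)$ lies in the symmetric difference of $N(w_1)$ and $N(w_2)$, which in this configuration is equivalent to
\[
N(w_1)\cap N(u)\;\neq\; N(w_2)\cap N(u).
\]

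Next I would observe that once one of $w_1,w_2$ has been forced black, the remaining white vertex is forced automatically: since $G$ is connected, the unique white vertex has at least one neighbor, which is necessarily black and now sees exactly one white neighbor. Consequently, $\{u\}$ is a PD-set if and only if the displayed inequality holds.

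It remains to translate this inequality into the statement ``$w_1$ and $w_2$ are not twins.'' Because $u\notin N[w_1]\cup N[w_2]$, we have $N(w_i)\subseteq N(u)\cup\{w_1,w_2\}$. I would split into two cases: if $w_1w_2\notin E(G)$, then $w_1,w_2$ cannot be closed twins, $N(w_i)=N(w_i)\cap N(u)$, and open-twinness is exactly $N(w_1)\cap N(u)=N(w_2)\cap N(u)$; if $w_1w_2\in E(G)$, then $w_1,w_2$ cannot be open twins, but $N[w_i]=(N(w_i)\cap N(u))\cup\{w_1,w_2\}$, and closed-twinness again reduces to the same equality. Thus in both cases $w_1,w_2$ are twins if and only if $N(w_1)\cap N(u)=N(w_2)\cap N(u)$, which gives the desired equivalence. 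The only step requiring genuine care is this last case analysis uniting the open- and closed-twin scenarios; the forcing analysis itself is a direct inspection of the color-change rule.
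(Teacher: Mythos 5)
Your proof is correct and rests on the same basic observation as the paper's: after the domination step only $w_1,w_2$ are white, so everything hinges on whether some $v\in N(u)$ sees exactly one of them. Where you differ is in organization: you route both directions through the single pivot condition $N(w_1)\cap N(u)\neq N(w_2)\cap N(u)$, showing it equivalent on one side to the availability of an initial force and on the other to non-twinness (split according to whether $w_1w_2$ is an edge). This buys you something concrete. The paper's converse direction argues by cases on $\deg(w_1)$ and, when $\deg(w_1)\ge 2$, simply asserts the existence of $v_1\in N(u)$ with $w_1\in N(v_1)$ and $w_2\notin N(v_1)$ --- such a vertex need not exist (e.g.\ when $N(w_1)\subsetneq N(w_2)$, where the first force must instead target $w_2$); the paper's other case also has the two forces written in the wrong order. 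Your symmetric-difference formulation absorbs these situations automatically, and your remark that once one white vertex is forced the other follows by connectivity cleanly replaces the paper's explicit two-step chains. The one hypothesis you should state explicitly when invoking that last step is connectedness, but the paper's standing convention grants it.
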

\begin{proof}
Suppose first that $w_1$ and $w_2$ are  twins.
In this case, every power dominating set must contain either $w_1$ or $w_2$.
Conversely, assume that $w_1$ and $w_2$ are not twins.
If $N(w_1)=\{w_2\}$, then for some vertex $v\in N(u)$, $v \rightarrow w_1$ and $w_1 \rightarrow w_2$, which means that $\{u\}$ is a PD-set.
If $deg(w_1)\ge 2$, then take  a vertex $v_1\in N(u)$ such that $w_1\in N(v_1)$ and $w_2 \not\in N(v_1)$.
Thus, $v_1 \rightarrow w_1$ and $v_2 \rightarrow w_2$, for any vertex $v_2$ such that $w_2\in N(v_2)$.
\end{proof}
%%%%%%%%%%%%%%%%%%%%%%%%%%%

%%%%%%%%%%%%%%%%%%%%%
\begin{cor}
Let $G$ be a  graph of order $n\ge4$.
If there exists a vertex $u\in V(G)$ such that $deg(u)=n-3$ and the pair of vertices of $V(G)\setminus N[u]$ are not twins, then $\gp(G)=1$.
\end{cor}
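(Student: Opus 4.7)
The plan is to apply the preceding lemma directly. By hypothesis, there exists a vertex $u \in V(G)$ with $\deg(u) = n-3$, so exactly two vertices, call them $w_1$ and $w_2$, lie outside $N[u]$; that is, $V(G) = N[u] \cup \{w_1, w_2\}$. This is precisely the setup of the lemma. Since we are further assuming that $w_1$ and $w_2$ are not twins, the lemma's ``if'' direction gives immediately that $\{u\}$ is a PD-set.

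From the existence of a PD-set of size one we get $\gp(G) \le 1$. Combined with the trivial lower bound $\gp(G) \ge 1$ (valid for any nonempty graph, and in particular whenever $n \ge 4$), we conclude $\gp(G) = 1$, as desired.

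The main obstacle, which is entirely resolved by the lemma itself, is verifying that starting the propagation from $N[u]$ actually forces $w_1$ and $w_2$ to become black. In the lemma's proof this is handled by splitting according to whether $\deg(w_1) = 1$ (so that a common neighbor in $N(u)$ forces $w_1$, which then forces $w_2$ along the edge $w_1w_2$) or $\deg(w_1) \ge 2$ (in which case the non-twin condition guarantees a neighbor $v_1$ of $u$ adjacent to $w_1$ but not to $w_2$, so that $v_1 \to w_1$ and some $v_2 \to w_2$). Since the corollary is stated as a straightforward specialization of the lemma, no further case analysis is needed here.
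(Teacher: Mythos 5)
Your proof is correct and matches the paper's intent exactly: the corollary is stated in the paper without proof, as an immediate application of the ``if'' direction of the preceding lemma together with the trivial bound $\gp(G)\ge 1$, which is precisely your argument. (Your parenthetical recap of the lemma's first case is slightly garbled --- if $N(w_1)=\{w_2\}$ then no vertex of $N(u)$ is adjacent to $w_1$, so the forcing must go $v\rightarrow w_2$ followed by $w_2\rightarrow w_1$ rather than the other way around --- but since you invoke the lemma as a black box this has no bearing on the corollary.)
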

%%%%%%%%%%%%%%%%%%%%%%%%%%%

The converse of this statement is not true.
For example, if we consider the graph $G$ displayed in Figure \ref{G9}, then it is easy to check that $\{w_1\}$ is a PD-set of $G$.

%%%%%%%%%%%%%%%%%%%%%%%%%%%%%%%%%%%%
\begin{figure}[!h]
	\centerline{\includegraphics[height=5cm]{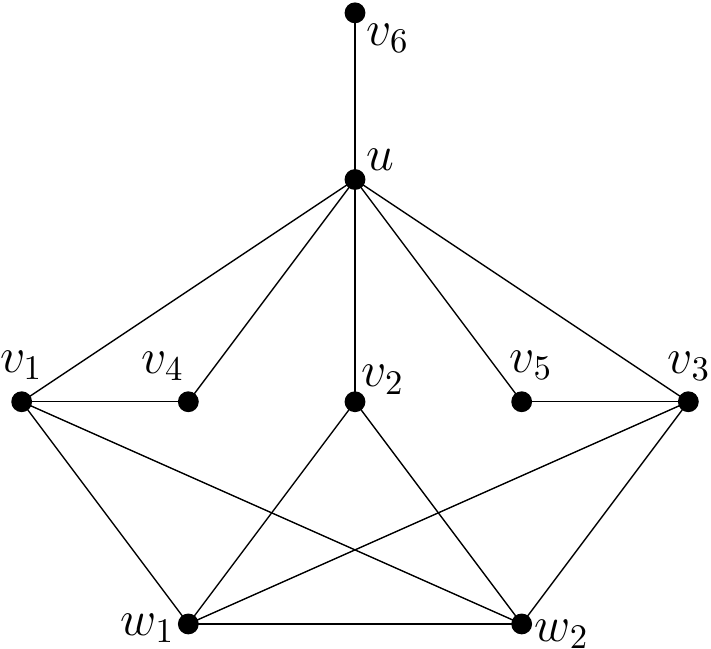}}
	\caption{The list a forcing chains for ${\cal F}_{N[w_1]}$ is: $\{(w_1),(w_2),(v_2,u,v_6),(v_1,v_4),(v_3,v_5)\}$.}
	\label{G9}
\end{figure}
%%%%%%%%%%%%%%%%%%%%%%%%%%%%%%%%%%%%

%%%%%%%%%%%%%%%%%%%%%%%%%%%%%%%%%%%%%%%
\begin{theorem}
Let $ G $ be a  $ (n-3) $-regular graph of order $n \geq 5$. 
Then, $ \gamma_{p}(G) = 1 $ if and only if there exist an edge $e=uv\in E(G) $ such that $ \mid N[v] \setminus N[u] \mid=1$.
\end{theorem}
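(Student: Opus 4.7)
My plan is to exploit the very rigid structure that $(n-3)$-regularity imposes: every vertex $s$ has exactly two non-neighbors, which I shall denote by $w_1$ and $w_2$, so $V(G)\setminus N[s]=\{w_1,w_2\}$. Hence $\{s\}$ being a PD-set reduces to the question of whether, starting from the black set $N[s]$, the zero forcing rule can successively turn $w_1$ and $w_2$ black. As a preliminary I would record the elementary identity that when $uv\in E(G)$ both $u$ and $v$ lie in $N[u]$, and therefore
\[
|N[v]\setminus N[u]|=|N(v)\cap(V(G)\setminus N[u])|,
\]
so $|N[v]\setminus N[u]|$ counts precisely the number of neighbors of $v$ among the two non-neighbors of $u$.

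For the forward direction I assume $\gp(G)=1$ with PD-set $\{s\}$ and inspect the very first force that must be performed when $N[s]$ is black and $\{w_1,w_2\}$ is white. Since neither $w_1$ nor $w_2$ is adjacent to $s$, the vertex $s$ has no white neighbor and cannot force; hence the first force must be carried out by some $v\in N(s)$. For such a $v$ to force, its white-neighbor set $N(v)\cap\{w_1,w_2\}$ must have cardinality exactly one, which by the identity above is exactly $|N[v]\setminus N[s]|=1$. The edge $e=sv$ (with $u=s$) then witnesses the required condition.

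For the backward direction, given an edge $uv$ with $|N[v]\setminus N[u]|=1$, the identity forces $v$ to be adjacent to exactly one of the two non-neighbors $w_1,w_2$ of $u$, say $w_1$. Then, in the colouring with $N[u]$ black and $\{w_1,w_2\}$ white, $v$ forces $w_1$; after this step only $w_2$ remains white and, since $\deg(w_2)=n-3\ge 2$ by the assumption $n\ge 5$, $w_2$ has a (black) neighbor $v'$ whose only white neighbor is $w_2$, so $v'$ forces $w_2$. Hence $cl(N[u])=V(G)$ and $\{u\}$ is a PD-set.

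I do not foresee any serious obstacle; the whole argument is essentially a two-step chase of the colour change rule. The only subtlety is the bookkeeping behind the preliminary identity for $|N[v]\setminus N[u]|$, where one must remember that both $u$ and $v$ already belong to $N[u]$, together with the mild use of $n\ge 5$ at the end to guarantee that the last white vertex still has a neighbor available to force it.
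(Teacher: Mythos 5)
Your proof is correct and follows essentially the same route as the paper's: both directions hinge on the observation that for an edge $uv$ one has $|N[v]\setminus N[u]|=|N(v)\cap (V(G)\setminus N[u])|$, so the condition says exactly that some neighbor $v$ of the candidate vertex $u$ sees precisely one of the two non-neighbors of $u$ and can therefore perform the first force. If anything, your write-up is slightly more complete than the paper's, which does not spell out how the second white vertex gets forced in the backward direction and which treats $n=5$ as a separate ``obvious'' case that your argument covers uniformly.
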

\begin{proof}
If $n=5$, then $G\cong C_5$, and the equivalence  is obvious.
Suppose thus that $n\ge6$.
\newline ($\Rightarrow$):
Let $S=\{u\}$ be a $\gp$-set of $G$.
%Let $N(u)=\{v_1,\ldots,v_{n-3}\}$ and
Let $W=V(G)\setminus N[u]=\{x,y\}$.
As $S$ is a $\gp$-set, there must exist a vertex $v\in N(u)$ such that $|N(v)\cap W|=1$.
Hence,   there exist a unique vertex $w\in N(u)\setminus \{v\}$ such that $w\not\in N(v)$, as $\deg(v)=n-3$.
\newline ($\Leftarrow$):
Take the sets $S=\{u\}$ and $W=V(G)\setminus N[u]=\{x,y\}$.
As $ \mid N[v] \setminus N[u] \mid=1$ and $deg(u)=deg(v)=n-3$, $|N(v)\cap W|=1$.
Hence, if for example $N(v)\cap W=\{x\}$, then $v \rightarrow x$, which means that $S$ is a $\gp$-set of $G$.
\end{proof}
%%%%%%%%%%%%%%%%%%%%%%%%%%%%%%%%%%%%%%%%%%%

\newpage
%%%%%%%%%%%%%%%%%%%%%%%%%%%%%%%%%%%%%%%%%%%%%%%%%%%%%%%%%%%%%%%%%%%%%%%%%%%%%%%%%%%%%%%%%%%%%%%%%%
%%%%%%%%%%%%%%%%%%%%%%%%%%%%%%%%%%%%%%%%%%%%%%%%%%%%%%%%%%%%%%%%%%%%%%%%%%%%%%%%%%%%%%%%%%%%%%%%%%
%%%%%%%%%%%%%%%%%%%%%%%%%%%%%%%%%%%%%%%%%%%%%%%%%%%%%%%%%%%%%%%%%%%%%%%%%%%%%%%%%%%%%%%%%%%%%%%%%%
\subsection{Graph operations}\label{graphoperations}

%%%%%%%%%%%%%%%%%%%%%%%%%%%%%%%%%%%%%%%%%%%%%%%%%%%%%%%%%%%%%%%%%%%%%%%%%%%%%%%%%%%%%%%%%%%%%%%%%%

%\subsection{Lexicographic product}\label{lp}

The vertex set of the \emph{lexicographic product} $G \circ H$ of graphs $G$ and $H$ is $V(G) \times V (H)$. 
Let  $u = (g, h)$ and $v = (g', h')$ be a pair of vertices of $V (G) \times V (H)$.
Vertices $u$ and $v$ are adjacent in the \emph{lexicographic product} $G \circ H$ if either $gg' \in E(G)$, or $g = g'$ and $hh' \in E(H)$.

A set $D$ of vertices of a graph $G$ is a \emph{total dominating set} if $N(D)=V(G)$.
The \emph{domination number} $\gamma_t(G)$ is the minimum cardinality of a total dominating set.

%%%%%%%%%%%%%%%%%%%%%%%%%%%%
\begin{theorem} [\cite{domoklsp08}]\label{lp_td}
	For any pair of nontrivial connected graphs $G$ and $H$, 
	$$\displaystyle \gamma_{\stackrel{}{P}}(G \circ H)=
\left\{
\begin{array}{lll}
      \gamma(G), & {\rm if} & {\displaystyle \gamma_{\stackrel{}{P}}(H)=1} \\
      
      \gamma_{\stackrel{}{t}}(G), & {\rm if} & {\displaystyle \gamma_{\stackrel{}{P}}(H)>1} \\
\end{array} 
\right. 
$$
\end{theorem}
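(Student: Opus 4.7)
The plan is to establish both cases by proving matching upper and lower bounds. For the upper bounds, the natural construction is $S = D \times \{h^*\}$: in Case 1 ($\gp(H)=1$) take $D$ a minimum dominating set of $G$ and $h^*$ a power dominating vertex of $H$, while in Case 2 ($\gp(H) > 1$) take $D$ a minimum total dominating set of $G$ and any $h^* \in V(H)$. Case 2 is essentially immediate: the total domination property forces every layer $\{g\} \times V(H)$ to be adjacent in $G \circ H$ to some $(g_i, h^*) \in S$ with $g_i \in D$, so $S$ is already a dominating set of $G \circ H$. In Case 1, layers $\{g\} \times V(H)$ for $g \in N_G(D) \setminus D$ or for $g \in D$ having a neighbor in $D$ are fully dominated by $S$; for the remaining layers $\{g_i\} \times V(H)$ with $g_i$ isolated in $G[D]$, the external layers $N_G(g_i) \times V(H)$ are entirely black, so the internal forcing process in $\{g_i\} \times V(H)$ faithfully simulates zero forcing in $H$ starting from $N_H[h^*]$, and hence fills the layer because $h^*$ is a power dominating vertex of $H$.

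For the lower bound in Case 1, I would project a PD-set $S$ to $\pi(S) = \{g \in V(G) : (g,h) \in S \text{ for some } h\}$ and show $\pi(S)$ dominates $G$. The observation is that if $g_0 \notin N_G[\pi(S)]$, then $\{g_0\} \times V(H)$ contains no vertex of $N[S]$ initially; moreover, the only candidates for an external force into this layer come from $(g', h)$ with $g' \in N_G(g_0)$, but such a vertex is adjacent to every vertex of $\{g_0\} \times V(H)$ and so fails the ``unique white neighbor'' condition as long as any vertex of the layer is still white. Hence the layer is unreachable, contradicting that $S$ is a PD-set, and $|S| \ge |\pi(S)| \ge \gamma(G)$.

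The main obstacle is Case 2's lower bound, where we must upgrade ``dominating'' to ``total dominating''. Let $S$ be a PD-set, set $A = \pi(S)$, write $T_g = \{h : (g,h) \in S\}$ for each $g \in A$, and define $B = \{g \in A : N_G(g) \cap A = \emptyset\}$. The key lemma is that $|T_g| \ge 2$ for every $g \in B$. For such a $g$, the external layers $N_G(g) \times V(H)$ are entirely black (since any $(g,h^*) \in S$ dominates them), so internal forcing inside $\{g\} \times V(H)$ matches the zero forcing process of $H$ from $N_H[T_g]$; meanwhile, an external force into this layer is available only once exactly one white vertex remains there. Connectedness of $H$ implies $|V(H) \setminus cl_H(N_H[T_g])| \neq 1$ (a lone white vertex of $H$ would be forced by any of its black neighbors), so either $T_g$ is itself a power dominating set of $H$, giving $|T_g| \ge \gp(H) \ge 2$, or the layer retains at least two white vertices forever, contradicting that $S$ is a PD-set. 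To finish, pick for each $g \in B$ a neighbor $g'_g \in N_G(g)$ (which exists by connectedness and nontriviality of $G$), and let $A' = A \cup \{g'_g : g \in B\}$; a short case check shows $A'$ is a total dominating set of $G$. The bookkeeping $|S| = \sum_{g \in A} |T_g| \ge (|A|-|B|)+2|B| = |A|+|B| \ge |A'| \ge \gamma_t(G)$ then concludes the proof.
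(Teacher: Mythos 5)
The paper does not prove this theorem at all: it is quoted as a known result from \cite{domoklsp08}, so there is no in-paper argument to compare yours against. Your proof is, as far as I can check, correct and complete, and it follows the natural layer-decomposition strategy for the lexicographic product (which is also the spirit of the original reference): cross-layer adjacency makes every vertex of $S$ dominate the \emph{entire} layers $N_G(g)\times V(H)$, so a layer whose $G$-coordinate is not dominated by $\pi(S)$ can never receive a force (any external neighbour sees the whole layer, hence at least $|V(H)|\ge 2$ white vertices), while a layer that is ``isolated'' in $\pi(S)$ has all its external neighbours black and therefore runs an exact copy of zero forcing in $H$ from $N_H[T_g]$. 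Your two key observations --- that connectedness of $H$ rules out a closure missing exactly one vertex, so an isolated layer either carries a power dominating set of $H$ (forcing $|T_g|\ge 2$ when $\gp(H)\ge 2$) or is permanently stuck with two white vertices, and that adding one neighbour per isolated vertex of $G[\pi(S)]$ upgrades the dominating set $\pi(S)$ to a total dominating set of size at most $|\pi(S)|+|B|\le |S|$ --- are exactly what is needed to close the Case 2 lower bound. Two small points you should make explicit in a final write-up: (i) the Case 2 lower bound silently reuses the Case 1 argument to know that $A=\pi(S)$ dominates $G$ (it does, verbatim, since that argument never used $\gp(H)=1$); and (ii) in the upper bounds you should state that a force performed inside an isolated layer is legal \emph{globally} because all of that vertex's neighbours outside the layer lie in $N_G(g_i)\times V(H)\subseteq N[S]$. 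Neither is a gap, just bookkeeping worth recording.
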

%%%%%%%%%%%%%%%%%%%%%%%%%%%%

%%%%%%%%%%%%%%%%%%%%%%%%%%%%
\begin{theorem} [\cite{hawa03}]\label{td2}
	For any pair  connected graph $G$, 
	$\displaystyle \gamma_{\stackrel{}{t}}(G)=2$ if and only if $diam(\overline{G})>2$
\end{theorem}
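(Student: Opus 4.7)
The plan is to translate the condition $\gamma_t(G)=2$ into a pairwise-distance condition on $\overline{G}$, and then invoke the definition of diameter. First, since $G$ is connected on at least two vertices, no vertex is its own neighbour, so $\gamma_t(G)\ge 2$; the equivalence therefore reduces to characterizing when some pair $\{u,v\}\subseteq V(G)$ forms a total dominating set of $G$.

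Next, I would unpack the total-domination condition. A pair $\{u,v\}$ totally dominates $G$ exactly when (i) $uv\in E(G)$, so that $u$ and $v$ dominate each other, and (ii) every other vertex $w$ lies in $N_G(u)\cup N_G(v)$. Rephrased in the complement, this becomes: (i$'$) $uv\notin E(\overline{G})$, and (ii$'$) no vertex $w\in V(G)\setminus\{u,v\}$ is adjacent in $\overline{G}$ to both $u$ and $v$. Conditions (i$'$) and (ii$'$) together say precisely that $u$ and $v$ are non-adjacent in $\overline{G}$ and share no common neighbour in $\overline{G}$, which is exactly $d_{\overline{G}}(u,v)\ge 3$, adopting the standard convention that vertices in distinct components of $\overline{G}$ are at distance $\infty$.

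The two directions now fall out. For $(\Rightarrow)$, if $\{u,v\}$ is a total dominating set of $G$, the translation above gives $d_{\overline{G}}(u,v)\ge 3$, whence ${\rm diam}(\overline{G})\ge 3>2$. For $(\Leftarrow)$, if ${\rm diam}(\overline{G})>2$, then either $\overline{G}$ is disconnected, in which case any two vertices $u,v$ drawn from distinct components satisfy (i$'$) and (ii$'$), or $\overline{G}$ is connected and contains a pair $u,v$ with $d_{\overline{G}}(u,v)\ge 3$; in either case $\{u,v\}$ is a total dominating set of $G$, so $\gamma_t(G)\le 2$, and combined with $\gamma_t(G)\ge 2$ this yields $\gamma_t(G)=2$.

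The only mild subtlety lies in handling the case where $\overline{G}$ is disconnected; this is settled by reading ${\rm diam}(\overline{G})>2$ via the infinite-distance convention. Beyond that, the proof is a direct unpacking of the definitions of total domination, adjacency in the complement, and graph distance, and I do not foresee any serious obstacle.
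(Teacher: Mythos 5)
The paper does not prove this statement: it is quoted directly from Hanson and Wang \cite{hawa03}, so there is no internal proof to compare against. Your argument is correct and complete: the reduction of $\gamma_t(G)=2$ to the existence of an adjacent pair $u,v$ with $N_G(u)\cup N_G(v)=V(G)$, its translation into ``$u,v$ non-adjacent in $\overline{G}$ with no common $\overline{G}$-neighbour,'' i.e.\ $d_{\overline{G}}(u,v)\ge 3$, and the observation that $\gamma_t(G)\ge 2$ always holds for a simple connected graph on at least two vertices, together give exactly the stated equivalence; your handling of a disconnected $\overline{G}$ via the infinite-distance convention is the standard reading of ${\rm diam}(\overline{G})>2$ and matches how the paper uses the theorem.
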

%%%%%%%%%%%%%%%%%%%%%%%%%%%%

%%%%%%%%%%%%%%%%%%%%%%%%%%%%
\begin{cor} 
	For any pair of nontrivial connected graphs $G$ and $H$, 
	
	\begin{itemize}
	
	\item $\displaystyle \gamma_{\stackrel{}{P}}(G \circ H)=1$ if and only if $\gamma(G)$=1 and $\gamma_{\stackrel{}{P}}(H)=1$.
	
	\item $\displaystyle \gamma_{\stackrel{}{P}}(G \circ H)=2$ if and only if 
	either $\gamma(G)=2$ and $\gamma_{\stackrel{}{P}}(H)=1$
	or $diam(\overline{G})>2$ and $\gamma_{\stackrel{}{P}}(H)>1$.
	
	\end{itemize}
\end{cor}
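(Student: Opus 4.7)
The proof will be a short case analysis, plugging the characterization of $\gamma_{t}(G)=2$ from Theorem \ref{td2} into the formula for $\gp(G\circ H)$ from Theorem \ref{lp_td}. The only auxiliary fact I need is the trivial observation that for every nontrivial connected graph $G$ one has $\gamma_{t}(G)\ge 2$: no vertex lies in its own open neighborhood, so a single vertex can never totally dominate $G$. I would record this as the first step.

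For the first bullet, I would argue that if $\gp(G\circ H)=1$, then Theorem \ref{lp_td} forces one of two possibilities: either $\gp(H)=1$ and $\gamma(G)=1$, or $\gp(H)>1$ and $\gamma_{t}(G)=1$. The second case is excluded by the observation above, which leaves exactly the desired conclusion. The reverse implication is immediate from the first line of Theorem \ref{lp_td}.

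For the second bullet, a similar split using Theorem \ref{lp_td} shows that $\gp(G\circ H)=2$ holds if and only if either $\gp(H)=1$ and $\gamma(G)=2$, or $\gp(H)>1$ and $\gamma_{t}(G)=2$. Applying Theorem \ref{td2} to the second alternative rewrites the condition $\gamma_{t}(G)=2$ as $\mathrm{diam}(\overline{G})>2$, which yields the statement of the corollary.

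There is no substantive obstacle here; the only point requiring a sentence of justification is the lower bound $\gamma_{t}(G)\ge 2$, which is what removes the spurious ``$\gamma_{t}(G)=1$'' branch of Theorem \ref{lp_td} and allows the clean dichotomy in the two bullets.
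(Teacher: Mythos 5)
Your proposal is correct and follows exactly the route the paper intends: the corollary is stated without proof as an immediate consequence of Theorems \ref{lp_td} and \ref{td2}, and your case split on $\gp(H)$ together with the observation that $\gamma_t(G)\ge 2$ for any nontrivial connected graph is precisely the missing bookkeeping. Nothing further is needed.
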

%%%%%%%%%%%%%%%%%%%%%%%%%%%%

%\newpage
%%%%%%%%%%%%%%%%%%%%%%%%%%%%%%%%%%%%%%%%%%%%%%%%%%%%%%%%%%%%%%%%%%%%%%%%%%%%%%%%%%%%%%%%%%%%%%%%%%%
%\subsection{Cartesian product}\label{lp}

The vertex set of the \emph{Cartesian product} $G \Box H$ of graphs $G$ and $H$ is $V(G) \times V (H)$. 
Let  $u = (g, h)$ and $v = (g', h')$ be a pair of vertices of $V (G) \times V (H)$.
Vertices $u$ and $v$ are adjacent in the \emph{Cartesian product} $G \Box H$ if 
either $g = g'$ and $hh' \in E(H)$, or $h = h'$ and $gg' \in E(G)$.

While a complete classification of graphs $G$ for which  $\gamma_p(G) = 1$ is not known yet and  it is certainly far for being simple, several authors were able to solve this problem  for the Cartesian product of two graphs. 
Before showing this  result, we define a graph operation. 
The graph obtained from $G$ and $H$ by {\it amalgamating} two vertices $g \in V (G)$ and $h \in V (H)$ has vertex set 
$V (G)\cup (V(H)\setminus\{h\})$ such that the subgraphs induced by $V(G)$ and $(V(H)\setminus\{h\})\cup\{g\}$ are $G$ and $H$, respectively.

%%%%%%%%%%%%%%%%%%%%%%%%%%%%
\begin{theorem}[\cite{koso16,soko18,vavi16}]
Let G and H be two nontrivial graphs such that $\gamma(G) \le \gamma (H)$. 
Then, $\gamma_p(G \Box H) = 1$ if and only if either
\begin{enumerate}[{\rm (1)}]

\item $G$ and $H$ each has order at least four, $\gamma(G)=1$ and $H$ is a path, or

\item $G$ is either $P_2$ or $P_3$ and $H$ can be obtained by amalgamating any vertex of a graph, say $D$, with  
$\gamma(D) = 1$ and an end vertex of $P_n$ with $n \ge 1$, or

\item $G \cong C_3$ and the $H$ is a path.

\end{enumerate}
\end{theorem}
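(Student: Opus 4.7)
The plan is to prove the two directions separately. The easier (though still case-heavy) direction is sufficiency, where I exhibit, in each of the three listed structural cases, an explicit single vertex whose closed neighborhood zero-forces all of $V(G\,\Box\,H)$. The harder direction is necessity, where assuming $\gamma_p(G\,\Box\,H)=1$ I must show that $(G,H)$ fits into one of the three families. Throughout the argument I will use the basic observation that, for $s=(g_0,h_0)$, the closed neighborhood in the Cartesian product is the ``cross'' $(N_G[g_0]\times\{h_0\})\cup(\{g_0\}\times N_H[h_0])$.

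For sufficiency in Case (1), I take $s=(g_0,h_0)$ with $g_0$ a dominating vertex of $G$ and $h_0$ a leaf of the path $H$. Then $N[s]\supseteq V(G)\times\{h_0\}$ together with $(g_0,h_1)$, where $h_1$ is the unique $H$-neighbor of $h_0$. Working row by row, every $(g,h_i)$ in the already-black row has exactly one white $H$-neighbor $(g,h_{i+1})$ while all its $G$-neighbors lie in that black row, so it forces that neighbor; iterating sweeps the whole graph. Case (3) uses the same argument, since a vertex of $C_3$ dominates the other two. Case (2) is slightly more delicate: I would place $s=(g_0,d_0)$ with $g_0$ a leaf of $G\in\{P_2,P_3\}$ and $d_0$ a dominating vertex of $D$, then verify by direct propagation that the $D$-part of each row becomes black first, that this permits the forcing to reach the amalgamation vertex $h^\ast$ in every row, and finally that the pendant path is swept just as in Case (1). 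When $G=P_3$, placing $g_0$ at the middle vertex works equally well.

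For necessity, I analyze the propagation from the starting cross in two orthogonal directions. A forcing step across a $G$-edge from $(g,h)$ requires every $H$-neighbor of $(g,h)$ already to be black, which ultimately constrains $h$ to lie in a ``path-like'' portion of $H$; a symmetric observation on columns constrains $G$. Combining these constraints with the hypothesis $\gamma(G)\le\gamma(H)$ narrows $G$ to one of $K_2,P_3,C_3$ or a graph of order at least four with $\gamma(G)=1$, and narrows $H$ to a path or an amalgamation of a dominating-vertex graph with a pendant path. I would organize the case split by first treating $\gamma(G)=1$ (leading to Cases (1) and (2) depending on the order of $G$) and then $\gamma(G)\ge 2$, where the hypothesis $\gamma(H)\ge\gamma(G)\ge 2$ together with the propagation constraints forces $G\in\{P_3,C_3\}$ and the pendant-path structure of $H$.

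The main obstacle will be the fine structural analysis in the necessity direction, in particular the interplay between propagation along $G$-edges and $H$-edges when the starting cross has long arms. Ruling out the many seemingly plausible ``hybrid'' graphs $H$ that are not pure paths but might still admit forcing requires showing that any branching vertex of $H$ (of degree $\ge 3$) outside the initially dominated region eventually becomes a bottleneck, i.e., a black vertex with at least two white neighbors at some stage of the propagation. Managing this exhaustion cleanly rather than through brute enumeration is the key challenge, and I would lean on the structural lemma that such a branching vertex must either lie in $N_H[h_0]$ or prevent the forcing process from reaching beyond it.
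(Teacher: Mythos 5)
The paper does not actually prove this theorem: it is imported verbatim from \cite{koso16,soko18,vavi16}, so there is no internal proof to compare against and your attempt must be judged on its own. Judged that way, it has concrete problems in both directions. In the sufficiency direction, your starting vertex for Case (2) with $G\cong P_3$ is wrong: placing $g_0$ at the middle vertex does \emph{not} ``work equally well''. Take $D=K_{1,3}$ with centre $c$ and leaves $l_1,l_2,l_3$, and $n=1$, so that $H=K_{1,3}$ (which the theorem covers, and indeed $\gamma_p(P_3\,\Box\,K_{1,3})=1$ via a leaf of $P_3$). Starting from the middle vertex paired with $c$, the black set is the three copies of $c$ together with the middle layer's copies of $l_1,l_2,l_3$, and every black vertex then has either zero or at least two white neighbours, so the process stalls immediately. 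Moreover, even with the correct (leaf) start, the propagation is not the tidy ``the $D$-part of each row becomes black first'' sweep you describe: with $D=K_{1,3}$ amalgamated at a leaf, the copy of $c$ in the far $P_3$-layer is forced only after the process travels out along the middle layer and comes back through a leaf column, so a correct proof must exhibit and verify a genuinely two-dimensional chronological list of forces rather than a row sweep.

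The necessity direction, which is where essentially all the content of the theorem lives, is not a proof but a restatement of the goal: ``constrains $h$ to lie in a path-like portion of $H$'' and ``any branching vertex \dots\ eventually becomes a bottleneck'' are exactly the assertions to be established, and no mechanism for establishing them is given. Your case organization is also miscalibrated: you propose to handle the branch $\gamma(G)\ge 2$ by concluding $G\in\{P_3,C_3\}$, but $\gamma(P_3)=\gamma(C_3)=1$, so that branch is vacuous as set up. The statement actually separates case (1) from cases (2)--(3) by the \emph{order} of $G$, not by its domination number; all three admissible families have $\gamma(G)=1$, and when $\gamma(G)\ge 2$ the correct conclusion is that $\gamma_p(G\,\Box\,H)=1$ is impossible. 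As it stands, the proposal establishes neither direction.
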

%%%%%%%%%%%%%%%%%%%%%%%%%%%%

%%%%%%%%%%%%%%%%%%%%%%%%%%%%
\begin{theorem} [\cite{dohe06}]\label{cartpr1}
Let $1\le m \le n$. Then,
 $\displaystyle \gamma_{\stackrel{}{P}}(P_m \Box P_n)=
\left\{
\begin{array}{lll}
      {\displaystyle \lceil \frac{m+1}{4} \rceil}, & {\rm if} & m \equiv 4 ({\rm mod}\, 8) \\
      
      {\displaystyle \lceil \frac{m}{4} \rceil}, &  & {\rm otherwise.} \\
\end{array} 
\right. 
$
\end{theorem}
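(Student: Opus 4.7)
The plan is to prove the formula by establishing matching upper and lower bounds.

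\emph{Upper bound.} I would exhibit an explicit power dominating set $S$ of the stated cardinality. The construction exploits the fact that a well-placed PMU initiates a zero-forcing wave capable of traversing a band of up to four consecutive rows before requiring assistance from another PMU. Concretely, I would place PMUs at rows $4k+2$ (for $k=0,1,2,\ldots$) and alternate the boundary column (column $1$ or column $n$) between successive PMUs so that the wave launched by one PMU propagates into the band of its neighbor. The verification that $N[S]$ is a zero-forcing set is a routine inductive check on the propagation of the color-change rule band by band. The exceptional $+1$ in the case $m \equiv 4 \pmod{8}$ reflects a parity obstruction: when the number of four-row bands is odd, the alternation of boundary columns cannot close up the final band, and one extra PMU must be added.

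\emph{Lower bound.} Let $S$ be any power dominating set, so that $N[S]$ is a zero-forcing set of $P_m \Box P_n$. I would use the forcing-chain decomposition associated with a chronological list ${\cal F}_{N[S]}$, which exhibits $V(P_m \Box P_n)$ as a disjoint union of $|N[S]|$ induced paths, each emanating from some vertex of $N[S]$. In a grid, induced paths obey a rigid ``zig-zag'' geometry: they can turn only in controlled ways without creating forbidden chords, so each chain has bounded vertical span per unit of horizontal progress. Combined with the elementary bound $|N[S]| \leq 5|S|$ and a careful charge-counting argument that distributes vertical coverage across the chains, this would yield $|S| \geq \lceil m/4 \rceil$.

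\emph{Main obstacle.} The hardest part will be the lower bound, particularly the $+1$ refinement in the case $m \equiv 4 \pmod{8}$. The naive counting yields only $\lceil m/4 \rceil$, and proving the sharper bound $\lceil (m+1)/4 \rceil$ requires a case analysis of hypothetical PD-sets of size exactly $\lceil m/4 \rceil$, showing that in each configuration some forcing chain is forced into a shape incompatible with reaching the first or last row. I expect this parity-based case analysis to occupy the majority of the work.
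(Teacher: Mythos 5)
This theorem is not proved in the paper at all: it is quoted verbatim from Dorfling and Henning \cite{dohe06}, so there is no in-paper argument to compare yours against. Judged on its own, your plan has two concrete gaps, one in each half. The upper bound construction you describe does not work. Placing PMUs at rows $4k+2$ in the boundary columns $1$ and $n$ produces, for large $n$, a family of pairwise distant blobs $N[(4k+2,1)]=\{(4k+1,1),(4k+2,1),(4k+3,1),(4k+2,2)\}$, each of which stalls after two or three forces; for $m=8$ and $S=\{(2,1),(6,n)\}$ the closure of $N[S]$ has only about a dozen vertices. The underlying picture --- that one PMU launches a wave that sweeps a band of up to four consecutive rows --- is false in a grid: if the black region is rows $1,\dots,k$ of columns $1,\dots,c$ with $k<m$, the corner vertex $(k,c)$ has two white neighbours, the front degrades into the diagonal region $\{(i,j): i+j\le k+c\}$, and no further force is possible; only a full column (or full row) can act as a moving front. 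The constructions that actually achieve $\lceil m/4\rceil$ place the PMUs so that their closed neighbourhoods overlap along a short diagonal near one side of the grid (for instance $\{(3,2),(5,3)\}$ power dominates $P_7\Box P_n$), and the verification is a two-phase cascade that first completes the two leftmost columns and only then sweeps; it is not a band-by-band induction.

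The lower bound plan cannot reach the stated constant. From $|N[S]|\le 5|S|$ and the fact that $N[S]$ is a zero forcing set you get only $|S|\ge Z(P_m\Box P_n)/5=m/5$, which is strictly weaker than $\lceil m/4\rceil$. Replacing $Z$ by the number of forcing chains does not help, because that number equals $|N[S]|$ itself; and bounding $|N[S]|$ below by the induced path cover number of the grid is worse still, since $P_m\Box P_n$ admits covers by roughly $m/2$ disjoint induced paths (one boustrophedon snake through the odd rows plus the leftover segments of the even rows), giving only $|S|\gtrsim m/10$. The ``careful charge-counting argument'' that is supposed to convert the factor $5$ into a factor $4$, and then to extract the extra $+1$ when $m\equiv 4 \pmod 8$, is precisely the missing mathematical content; in \cite{dohe06} the lower bound is obtained by a bespoke analysis of how an observed region can grow inside a grid, not by counting chains. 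Your instinct that the $m\equiv 4\pmod 8$ case is a parity phenomenon and that the lower bound is the hard part is sound, but as written neither half of the argument goes through.
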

%%%%%%%%%%%%%%%%%%%%%%%%%%%%

%%%%%%%%%%%%%%%%%%%%%%%%%%%%
\begin{cor}
Let $1\le m \le n$. Then,
 $\displaystyle \gamma_{\stackrel{}{P}}(P_m \Box P_n)=
\left\{
\begin{array}{lll}
      1, & {\rm if} & 1 \le m \le  3 \\
      
      2, & {\rm if} & 4 \le  m \le 8. \\
\end{array} 
\right. 
$
\end{cor}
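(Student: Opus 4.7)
The plan is to derive this corollary as a direct specialization of Theorem \ref{cartpr1}, which already gives the exact value of $\gp(P_m \Box P_n)$ for all $1 \le m \le n$ via the closed formula
\[
\gp(P_m \Box P_n) = \begin{cases} \lceil (m+1)/4 \rceil, & m \equiv 4 \pmod 8, \\ \lceil m/4 \rceil, & \text{otherwise.} \end{cases}
\]
So no new combinatorial argument is needed; the task is purely arithmetic, namely to evaluate the piecewise formula for each $m \in \{1,2,3,4,5,6,7,8\}$.

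First I would dispose of the range $1 \le m \le 3$. Here $m \not\equiv 4 \pmod 8$, so the relevant expression is $\lceil m/4 \rceil$, which equals $1$ for $m \in \{1,2,3\}$. This gives the first case of the corollary.

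Next I would handle $4 \le m \le 8$ by splitting off $m=4$, which is the only residue $4 \pmod 8$ in this range. For $m=4$ the formula returns $\lceil 5/4 \rceil = 2$. For the remaining values $m \in \{5,6,7,8\}$, the formula gives $\lceil m/4 \rceil$, which is $2$ in each case (since $\lceil 5/4 \rceil = \lceil 6/4 \rceil = \lceil 7/4 \rceil = \lceil 8/4 \rceil = 2$). Combining the two subcases yields $\gp(P_m \Box P_n) = 2$ throughout $4 \le m \le 8$.

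There is no real obstacle: the only thing to be careful about is the mod-$8$ exception at $m=4$, which could be easily overlooked if one naively wrote $\lceil m/4 \rceil$ for every $m$. Since $\lceil 4/4 \rceil = 1$ would wrongly suggest $\gp(P_4 \Box P_n) = 1$, it is worth explicitly flagging $m=4$ as the case where the exceptional branch of Theorem \ref{cartpr1} kicks in and rescues the value $2$.
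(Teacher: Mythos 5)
Your proposal is correct and is exactly the derivation the paper intends: the corollary is stated immediately after Theorem \ref{cartpr1} with no separate proof, precisely because it reduces to evaluating the piecewise formula for $m\in\{1,\ldots,8\}$, including the exceptional branch at $m=4$ that you correctly flag. Nothing further is needed.
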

%%%%%%%%%%%%%%%%%%%%%%%%%%%

%%%%%%%%%%%%%%%%%%%%%%%%%%%%
\begin{theorem}[\cite{vavi16}]
Let $G$ and $H$ be two  graphs. 

\begin{enumerate}[{\rm (1)}]

\item If $\gamma(H)=1$, then $\gp(G \Box H) \le  Z(G)$.

\item If $H\cong P_n$, then $\gamma_{P}(G \Box P_{n}) \leq \gamma(G)$.

\end{enumerate}

\end{theorem}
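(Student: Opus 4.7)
I would attack part (1) by producing, from a ZF-set $Z$ of $G$ of size $Z(G)$ and a vertex $h^{*}\in V(H)$ with $N_H[h^{*}]=V(H)$ (which exists because $\gamma(H)=1$), the candidate PD-set $S = Z\times\{h^{*}\}$ in $G\Box H$. Using the product adjacency rule together with $N_H[h^{*}]=V(H)$, one computes
\[
N[S] \;=\; (Z\times V(H))\;\cup\;(N_G[Z]\times\{h^{*}\}),
\]
so every column $\{z\}\times V(H)$ with $z\in Z$ is already entirely black. Once $S$ is shown to be a PD-set, this will immediately yield $\gp(G\Box H)\le|Z|=Z(G)$.

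The heart of the argument is an induction on the steps of a chronological list of forces ${\cal F}_Z$ of $Z$ in $G$. Writing $Z=Z^{(0)}\subseteq Z^{(1)}\subseteq\cdots\subseteq Z^{(k)}=V(G)$ for the corresponding increasing sequence of coloured sets in $G$, I plan to prove $Z^{(i)}\times V(H)\subseteq cl(N[S])$ by induction on $i$. The base case is immediate. For the inductive step, fix the force $u\rightarrow v$ that turns $Z^{(i)}$ into $Z^{(i+1)}$, so that $N_G(u)\setminus Z^{(i)}=\{v\}$, and pick an arbitrary $h\in V(H)$. The column neighbors $\{u\}\times N_H(h)$ of $(u,h)$ all lie in $\{u\}\times V(H)\subseteq Z^{(i)}\times V(H)$ and so are black by hypothesis, while its in-layer neighbors $(N_G(u)\setminus\{v\})\times\{h\}$ also lie in $Z^{(i)}\times V(H)$ and are black. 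Consequently $(v,h)$ is either already black or is the unique white neighbor of $(u,h)$, in which case the force $(u,h)\rightarrow(v,h)$ applies. Letting $h$ range over $V(H)$ adjoins $\{v\}\times V(H)$ to the closure and completes the induction, so $cl(N[S])=V(G\Box H)$.

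For part (2), I would label $V(P_n)=\{1,\dots,n\}$ so that $i$ and $i+1$ are adjacent, fix a minimum dominating set $D$ of $G$, and take $S=D\times\{1\}$. Because $N_G[D]=V(G)$, a direct computation gives
\[
N[S] \;=\; (V(G)\times\{1\})\;\cup\;(D\times\{1,2\}),
\]
so the whole first layer $L^1=V(G)\times\{1\}$ is black from the outset. I would then induct on $j\in\{1,\dots,n-1\}$ to show that once $L^j$ is fully black, the whole of $L^{j+1}$ also lies in the closure: for every $g\in V(G)$ the in-layer neighbors of $(g,j)$ are black, its column neighbor $(g,j-1)$ (when $j>1$) is black by the induction hypothesis, and therefore $(g,j+1)$ is either already black or gets forced by $(g,j)$. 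After $n-1$ waves all layers are black, so $S$ is a PD-set of $G\Box P_n$ and $\gp(G\Box P_n)\le|D|=\gamma(G)$.

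The delicate point in both parts is confirming that at each claimed force the column neighbors of the forcing vertex are already black, so that the designated white neighbor really is unique. In (1) this is automatic because $Z^{(i)}\times V(H)$ already contains the entire $u$-column; in (2) it is automatic because interior vertices of $P_n$ have only two column neighbors and the wave advances one layer at a time, always leaving a single uncolored direction. I do not anticipate any genuinely hard step; the arguments are bookkeeping rather than technical.
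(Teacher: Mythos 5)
Your proof is correct. Note that the paper itself offers no proof of this statement --- it is quoted from the cited reference \cite{vavi16} --- so there is nothing internal to compare against; your two constructions (lifting a ZF-set of $G$ into the layer of a dominating vertex of $H$, and sweeping a dominating set of $G$ along the path layer by layer) are exactly the standard arguments from that source, and both inductions are carried out without gaps.
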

%%%%%%%%%%%%%%%%%%%%%%%%%%%%

%%%%%%%%%%%%%%%%%%%%%%%%%%%%%%%%
\begin{cor}
Let $G$ and $H$ be two  graphs of order at least 4. 
If $G$ is outerplanar, $P(G)=2$ and $ \gamma(H)=1$, then $\gp(G\Box H) = 2$.
\end{cor}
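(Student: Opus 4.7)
The plan is to sandwich $\gp(G \Box H)$ between $2$ and $2$, relying almost entirely on results already quoted in the excerpt. First, I would turn the hypotheses on $G$ into a statement about its zero forcing number: since $G$ is outerplanar with $P(G)=2$ and $|V(G)| \ge 4$, Theorem \ref{mainzf} gives $Z(G)=2$ whenever $|V(G)| \ge 5$. For the single remaining case $|V(G)|=4$, I would dispatch the short list of outerplanar $4$-vertex graphs with $P(G)=2$ (namely $C_4$, $K_{1,3}$, the paw, and $K_4-e$) by hand and check that each one still satisfies $Z(G)=2$.

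With $Z(G)=2$ in hand, the upper bound is immediate from item (1) of the theorem of \cite{vavi16} stated just above the corollary: since $\gamma(H)=1$,
$$\gp(G \Box H) \;\le\; Z(G) \;=\; 2.$$

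For the matching lower bound, I would argue by contradiction. Suppose $\gp(G \Box H) = 1$. Because $\gamma(H)=1 \le \gamma(G)$, I can apply the classification theorem from \cite{koso16,soko18,vavi16} to the ordered pair $(H,G)$ (i.e.\ with $H$ playing the role of the graph with smaller domination number). Then one of the three exceptional configurations listed there must hold. Configuration (1) forces $G$ to be a path, which is impossible since a path has path cover number $1$ while we assume $P(G)=2$. Configurations (2) and (3) each force $|V(H)| \le 3$, contradicting $|V(H)| \ge 4$. Thus no case can occur, so $\gp(G \Box H) \ne 1$, and combining with the upper bound gives $\gp(G \Box H)=2$.

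I do not foresee a real obstacle: both inequalities are essentially direct citations, and the only mildly delicate point is verifying the $|V(G)|=4$ base case for $Z(G)=2$, which is a routine finite check.
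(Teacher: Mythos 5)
Your proof is correct and follows the route the paper intends: the upper bound $\gp(G\Box H)\le Z(G)=2$ comes from Theorem \ref{mainzf} together with item (1) of the theorem of \cite{vavi16}, and the lower bound comes from ruling out the three configurations in the classification of $\gamma_p(G\Box H)=1$. Your extra care with the order-$4$ case of $Z(G)=2$ (which Theorem \ref{mainzf} does not literally cover) and your explicit case analysis for the lower bound only make the argument more complete than what the paper leaves implicit.
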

%%%%%%%%%%%%%%%%%%%%%%%%%%%%%%%%

%%%%%%%%%%%%%%%%%%%%%%%%%%%%
\begin{theorem}[\cite{koso16}]
Let $G$ and $H$ be two  graphs. 

\begin{enumerate}[{\rm (1)}]

\item $\max\{\gp(G),\gp(H)\} \le \gp (G\Box H)$.

\item If $H$ is a tree $T$, then $\gp(G)\cdot\gp(T) \le \gp (G\Box T)$.

\end{enumerate}
 
\end{theorem}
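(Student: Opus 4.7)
The plan is to prove (1) via a projection argument onto each factor, and to prove (2) by combining that projection with the spider-decomposition characterisation $\gp(T)=sp(T)$ of Theorem~\ref{sp}.

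For (1), let $S$ be a PD-set of $G\,\Box\,H$ and set $S_G=\{g\in V(G):(g,h)\in S\text{ for some }h\in V(H)\}$, so $|S_G|\le|S|$. I would show that $S_G$ is a PD-set of $G$; by symmetry the analogous statement for $S_H$ will hold, giving $\max\{\gp(G),\gp(H)\}\le|S|=\gp(G\,\Box\,H)$. First, a direct computation of neighbourhoods in the Cartesian product yields $\pi_G(N_{G\Box H}[S])=N_G[S_G]$, so the initial colouring in $G\,\Box\,H$ projects correctly to the initial colouring of $G$ from $N_G[S_G]$.

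The core step is to show that forcing steps themselves also project legally. Suppose from a coloured set $A\subseteq V(G\,\Box\,H)$ we perform a force $(g_1,h_1)\to(g_2,h_2)$. If $g_1=g_2$ (the force is along an $H$-edge) then $\pi_G(A)$ is unchanged. Otherwise $h_1=h_2$ and $g_1g_2\in E(G)$; for every $g'\in N_G(g_1)\setminus\{g_2\}$ the vertex $(g',h_1)$ is a neighbour of $(g_1,h_1)$ in $G\,\Box\,H$ distinct from $(g_2,h_2)$, so it must already lie in $A$, and hence $g'\in\pi_G(A)$. Therefore either $g_2\in\pi_G(A)$ (a no-op in $G$) or $g_2$ is the unique white $G$-neighbour of $g_1$ with respect to $\pi_G(A)$, and $g_1\to g_2$ is a legal force in $G$. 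Iterating this correspondence, since the forcing process on $G\,\Box\,H$ starting from $N[S]$ exhausts $V(G\,\Box\,H)$, its $G$-projection exhausts $V(G)$, confirming that $S_G$ is a PD-set of $G$.

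For (2), my plan is to fix a minimum spider partition $V(T)=V_1\cup\cdots\cup V_k$ with $k=\gp(T)$ guaranteed by Theorem~\ref{sp}, and for any PD-set $S$ of $G\,\Box\,T$ to set $S^{(i)}=S\cap(V(G)\times V_i)$. The goal is the slab-wise bound $|\pi_G(S^{(i)})|\ge\gp(G)$ for every $i$, since summing then yields $|S|\ge k\cdot\gp(G)=\gp(G)\cdot\gp(T)$. I would prove this bound by adapting the projection argument of part~(1) inside each slab $G\,\Box\,T[V_i]$, whose second factor is a spider and hence has $\gp=1$. The main obstacle, and the reason the tree hypothesis is essential, is controlling forces that cross between slabs along a tree-edge of $T$: a slab could in principle be partially coloured ``for free'' by propagation from an adjacent slab, undermining the slab-internal projection. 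Two structural facts drive the resolution: $T$ being a tree makes the cut-edges between $V_i$ and its complement acyclic, and each $T[V_i]$ being a spider makes internal propagation essentially one-dimensional along its legs. Combining them, one can show that a cross-slab force into column~$g$ of slab~$i$ can occur only after a substantial portion of column~$g$ outside $V_i$ has already been coloured from $S$, and this cost can be charged back so that no single slab contributes fewer than $\gp(G)$ vertices of $S$, giving the desired inequality.
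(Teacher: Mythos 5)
The paper states this theorem only as a citation to \cite{koso16} and supplies no proof of its own, so there is nothing internal to compare against; I am judging your argument on its merits. Your part (1) is correct and is the standard projection argument: the identity $\pi_G(N_{G\Box H}[S])=N_G[S_G]$, together with your case analysis showing that every force either leaves the projection unchanged (an $H$-layer force) or induces a legal force or a no-op in $G$, gives by induction along the chronological list of forces that the projection of the coloured set never leaves $cl(N_G[S_G])$; hence $S_G$ is a power dominating set of $G$ and $\gp(G)\le|S_G|\le|S|$, with the symmetric bound for $H$.

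Part (2), however, has a genuine gap at exactly the point you flag and then wave past. The entire content of $\gp(G)\cdot\gp(T)\le\gp(G\Box T)$ lies in the claim that each slab $V(G)\times V_i$ of your minimum spider partition contributes at least $\gp(G)$ vertices of $S$ (or some amortized substitute), and your justification consists of the phrases ``a substantial portion of column $g$ outside $V_i$ has already been coloured'' and ``this cost can be charged back'', neither of which is defined or proved. The projection argument of part (1) cannot simply be rerun inside a slab: the vertices of $V(G)\times V_i$ that become black are not obtained from $N[S^{(i)}]$ alone, because colour leaks into the slab across the tree-edges joining $V_i$ to the rest of $T$, and these leaked vertices act as free initial colour that invalidates the conclusion ``$\pi_G(S^{(i)})$ is a PD-set of $G$''. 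Worse, the target itself is in doubt: nothing you say rules out a minimum PD-set of $G\Box T$ that is unbalanced across the slabs of your chosen partition, in which case the slab-wise bound would fail even though the summed inequality holds. To close this you need either a precise accounting lemma quantifying the cost of a cross-slab force, or a different global strategy altogether, e.g.\ induction on $sp(T)$ that peels off one spider at a time (using a branch vertex all of whose subtrees beyond it are legs) and shows that the part of $S$ serving the remaining subtree still power dominates the corresponding subproduct. As written, part (2) is a plausible plan, not a proof.
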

%%%%%%%%%%%%%%%%%%%%%%%%%%%

%%%%%%%%%%%%%%%%%%%%%%%%%%%%
\begin{cor}\label{cppaths}
For any  graph $G$, $\gamma_{P}(G) \le \gamma_{P}(G \Box P_{n})  \le   \gamma(G) $. 
In particular, 
\begin{itemize}
\item $\gp(G) \le \gp(G\Box P_2) \le \min\{\gamma(G),Z(G)\}$.
\item If $ \gamma_{P}(G)=\gamma(G) $, then $ \gamma_{P}(G \Box P_{n}) = \gamma(G) $.
\end{itemize}
\end{cor}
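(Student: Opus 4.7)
The plan is to assemble the chain of inequalities by citing the two theorems immediately preceding the corollary. For the lower bound $\gp(G)\le \gp(G\Box P_n)$, I would invoke part (1) of the theorem of Koso and So, which says $\max\{\gp(G),\gp(H)\}\le \gp(G\Box H)$; specializing $H=P_n$ yields $\gp(G)\le \gp(G\Box P_n)$ immediately. For the upper bound $\gp(G\Box P_n)\le \gamma(G)$, I would simply quote part (2) of the theorem of Vaidya and Vithalani, which states exactly this inequality when the second factor is a path.

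For the first bullet, I specialize $n=2$. The inequality $\gp(G)\le \gp(G\Box P_2)$ is a special case of what we have just proved, and $\gp(G\Box P_2)\le \gamma(G)$ follows from the same reference. To obtain $\gp(G\Box P_2)\le Z(G)$, I would use part (1) of the Vaidya--Vithalani theorem: since $\gamma(P_2)=1$, setting $H=P_2$ gives $\gp(G\Box P_2)\le Z(G)$. Combining these two upper bounds yields $\gp(G\Box P_2)\le \min\{\gamma(G),Z(G)\}$.

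For the second bullet, I would note that the main chain $\gp(G)\le \gp(G\Box P_n)\le \gamma(G)$ already established collapses whenever its endpoints coincide. Under the hypothesis $\gp(G)=\gamma(G)$, both the leftmost and rightmost terms equal $\gamma(G)$, forcing the middle term to equal $\gamma(G)$ as well, which is the desired equality.

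Since each step is a direct invocation of a previously stated result, no step presents a real obstacle; the only care needed is in verifying that the hypotheses of each cited theorem are satisfied (in particular, that $\gamma(P_2)=1$ for the application of the Vaidya--Vithalani bound involving $Z(G)$, and that $P_n$ is a tree so that the path-case statement applies).
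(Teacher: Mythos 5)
Your proof is correct and matches the paper's intended (implicit) derivation exactly: the lower bound comes from part (1) of the Koh--Soh theorem, the upper bounds from parts (2) and (1) of the Varghese--Vijayakumar theorem (the latter applicable since $\gamma(P_2)=1$), and the second bullet is the sandwich argument. The only slip is in the attributions --- the cited results are due to Koh and Soh, and to Varghese and Vijayakumar, not ``Koso and So'' or ``Vaidya and Vithalani'' --- but this does not affect the mathematics.
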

%%%%%%%%%%%%%%%%%%%%%%%%%%%

%%%%%%%%%%%%%%%%%%%%%
\begin{prop}
	For any  graph $G$,
if $\gp(G) =1$, then $\gp(G\Box P_2) \le 2$.
\end{prop}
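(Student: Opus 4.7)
The plan is to lift a size-one power dominating set of $G$ to a size-two power dominating set of $G\Box P_2$. Write $V(P_2)=\{1,2\}$, let $\{v\}$ be a PD-set of $G$, and take $S=\{(v,1),(v,2)\}\subseteq V(G\Box P_2)$. Since $(v,i)$ is adjacent to $(u,i)$ for every $u\in N_G(v)$ and to $(v,3-i)$, one checks immediately that $N_{G\Box P_2}[S]=N_G[v]\times\{1,2\}$. So the goal reduces to proving that $N_G[v]\times\{1,2\}$ is a zero forcing set of $G\Box P_2$.

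The key idea is to simulate the forcing process of $G$ on both layers of $G\Box P_2$ in parallel. Fix a chronological list of forces $\mathcal{F}=(u_1\to w_1,u_2\to w_2,\ldots,u_t\to w_t)$ witnessing $cl(N_G[v])=V(G)$, and let $B_k=N_G[v]\cup\{w_1,\ldots,w_k\}$ denote the black set after the first $k$ forces. I will lift $\mathcal{F}$ to the list $\mathcal{F}'$ obtained by replacing each $u_k\to w_k$ by the pair of forces $(u_k,1)\to(w_k,1)$ followed by $(u_k,2)\to(w_k,2)$, and prove by induction on $k$ that the black set in $G\Box P_2$ after the first $2k$ forces of $\mathcal{F}'$ is exactly $B_k\times\{1,2\}$.

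For the inductive step, the neighbors of $(u_k,1)$ in $G\Box P_2$ are $\{(u',1):u'\in N_G(u_k)\}\cup\{(u_k,2)\}$. By the definition of $\mathcal{F}$ we have $N_G(u_k)\setminus\{w_k\}\subseteq B_{k-1}$, so every $(u',1)$ with $u'\in N_G(u_k)\setminus\{w_k\}$ is black; moreover $u_k\in B_{k-1}$, so the cross-layer neighbor $(u_k,2)$ is also black by the induction hypothesis. Thus $(w_k,1)$ is the unique white neighbor of $(u_k,1)$, so the force $(u_k,1)\to(w_k,1)$ is valid. Applying it, the argument for $(u_k,2)\to(w_k,2)$ is symmetric: its neighbors in the second layer $(u',2)$ with $u'\in N_G(u_k)\setminus\{w_k\}$ are black by the hypothesis, and the cross-layer neighbor $(u_k,1)$ is black because we just forced in layer one is irrelevant—$(u_k,1)$ was already in $B_{k-1}\times\{1\}$ at the start of the step. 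Hence only $(w_k,2)$ is white, and the second force is valid; the new black set is $B_k\times\{1,2\}$, completing the induction.

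After step $t$ we obtain $V(G)\times\{1,2\}=V(G\Box P_2)$, so $cl(N_{G\Box P_2}[S])=V(G\Box P_2)$ and therefore $\gp(G\Box P_2)\le|S|=2$. The only delicate point is making sure the cross-layer neighbor $(u_k,3-i)$ is already black at the moment we invoke the force $(u_k,i)\to(w_k,i)$; this is precisely what the invariant $B_k\times\{1,2\}$ secures, because $u_k$ is always black in $G$ before it forces in $G$, so $(u_k,1)$ and $(u_k,2)$ are both black in $G\Box P_2$ before the lifted forces. No further obstacle arises.
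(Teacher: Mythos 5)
Your proposal is correct and takes essentially the same approach as the paper: both choose $S=\{(v,1),(v,2)\}$ and run the forcing process of $G$ inside each layer. Your write-up is actually more careful than the paper's, which simply asserts that each $(v,i)$ power dominates its own copy of $G$; your parallel-induction invariant $B_k\times\{1,2\}$ is exactly what is needed to rule out the cross-layer neighbor $(u_k,3-i)$ remaining white and blocking a force, a point the paper leaves implicit.
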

\begin{proof}
If $V(P_2)=\{v_1,v_2\}$, then, $V(G\Box P_2)=V_1\cup V_2=\{(x,v_1): x \in V(G)\}\cup\{(x,v_2):x\in V(G)\}$.
Let $\{u\}$ a $\gp$-set of $G$.
Take $S=\{(u,v_1),(u,v_2)\}$.
Notice that $\{(u,v_1)\}$ is a $\gp$-set of $G_1=G[V(G)\times\{v_1\}]$ and $\{(u,v_2)\}$ is a $\gp$-set of $G_2=G[V(G)\times\{v_2\}]$.
Hence, $S$ is power dominating set of $G\Box P_2$, i.e., $\gp(G\Box P_2) \le 2$.

\end{proof}
%%%%%%%%%%%%%%%%%%%%%

%%%%%%%%%%%%%%%%%%%%%%%%%%%%
\begin{cor}
Let be a graph such that $\gp(G) =1$.
Then, $\gp(G\Box P_2)=2$ if and only if $G$ can not be obtained by amalgamating any vertex of a graph, say $D$, with  
$\gamma(D) = 1$ and an end vertex of $P_n$ with $n \ge 1$.
\end{cor}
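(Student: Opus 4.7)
The plan is to combine the preceding proposition with the cited classification theorem for Cartesian products whose power domination number equals $1$. Since $\gp(G)=1$, the previous proposition yields $\gp(G \Box P_2) \le 2$, and as $\gp \ge 1$ always, we have $\gp(G \Box P_2) \in \{1, 2\}$. Consequently, $\gp(G \Box P_2) = 2$ if and only if $\gp(G \Box P_2) \ne 1$, so the corollary reduces to characterizing when $\gp(G \Box P_2) = 1$.

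For this, I apply the theorem of \cite{koso16,soko18,vavi16} to the product $P_2 \Box G$; this is legitimate because $\gamma(P_2) = 1 \le \gamma(G)$, so $P_2$ plays the role of the first factor in that theorem. Among its three cases, case (1) requires both factors to have order at least four, which fails since $|V(P_2)| = 2$; case (3) requires the first factor to be isomorphic to $C_3$, which likewise fails. Thus only case (2) can produce $\gp(P_2 \Box G) = 1$. Since $P_2$ obviously satisfies the condition ``being $P_2$ or $P_3$'' of case (2), that case reads precisely: $G$ can be obtained by amalgamating some vertex of a graph $D$ with $\gamma(D) = 1$ and an end vertex of a path $P_n$ with $n \ge 1$.

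Putting the two steps together gives the desired biconditional by contraposition. The argument presents no real obstacle; it is essentially a direct deduction from the two cited results, and the only thing to verify is that, with one factor equal to $P_2$, the only case of the classification theorem that can apply is case (2).
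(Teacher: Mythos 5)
Your argument is correct and is exactly the derivation the paper intends (the corollary is stated without proof, but it is meant to follow from the preceding proposition, which gives $\gp(G\Box P_2)\le 2$, together with the cited classification of Cartesian products with power domination number $1$, applied with $P_2$ as the factor of smaller domination number so that only case (2) can occur). The only point worth making explicit is that the classification theorem assumes both factors are nontrivial, so $G$ should be taken nontrivial, which is consistent with the paper's standing conventions.
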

%%%%%%%%%%%%%%%%%%%%%%%%%%%

Observe that, according to Corollary \ref{cppaths}, if $\gp(G) \ge 2$ and $\gp(G\Box P_2) =2$, then $\gp(G) =2$.
Nevertheless, the converse is not true.
For example, if we consider the  graph $ G $ displayed in Figure \ref{fig8}, it is easy to check that  that $ \gp(G)=2 $ and $\gp(G\Box P_2) =3$.

%%%%%%%%%%%%%%%%%%%%%%%%%%%
\begin{figure}[!h]
	\centerline{\includegraphics[height=5cm]{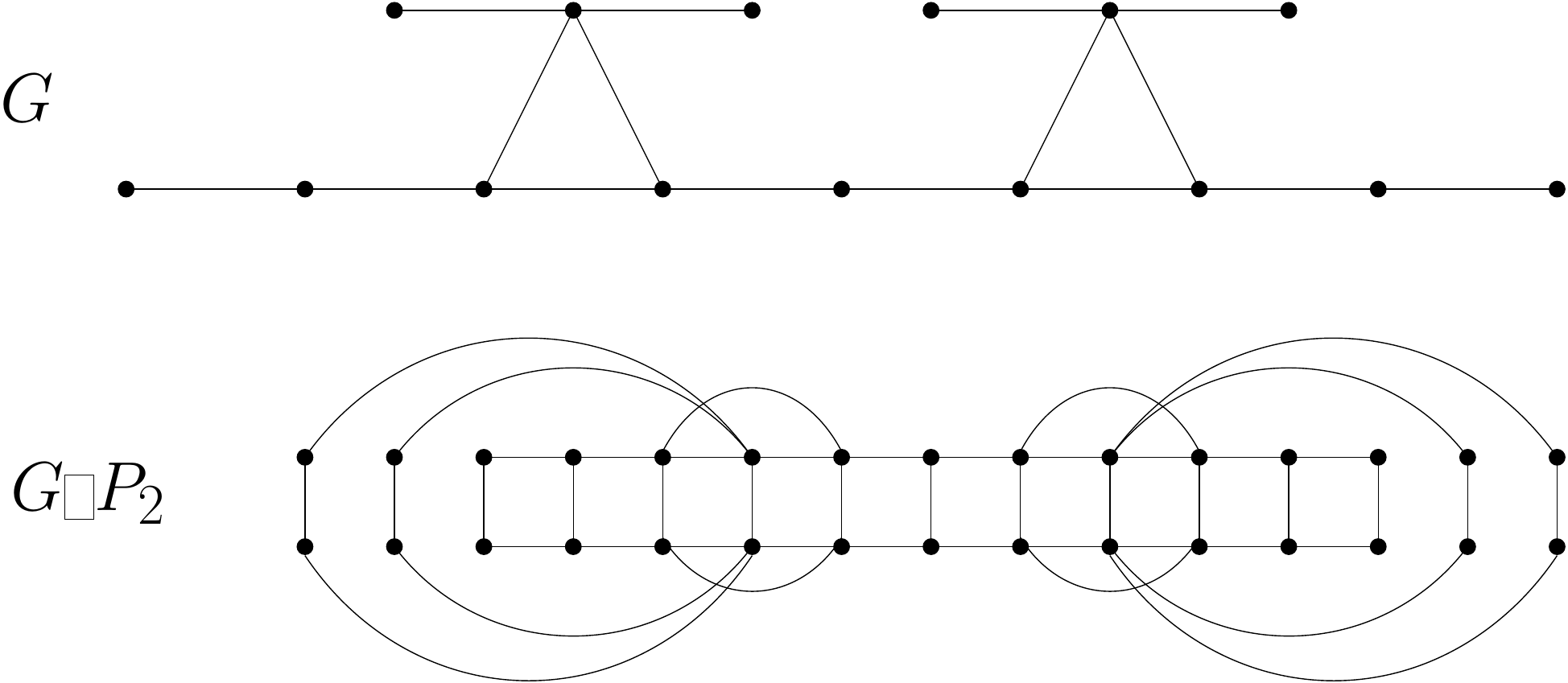}}
	\caption{$ \gp(G)=2 $ and $\gp(G\Box P_2) =3$.}
	\label{fig8}
\end{figure}
%%%%%%%%%%%%%%%%%%%%%%%%%%%%

\newpage
%%%%%%%%%%%%%%%%%%%%%%%%%%%%%%%%%%%%%%%%%%%%%%%%%%%%%%%%%%%%%%%%%%%%%%%%%%%%%%%%%%%%%%%%%%%%%%%%%%
%%%%%%%%%%%%%%%%%%%%%%%%%%%%%%%%%%%%%%%%%%%%%%%%%%%%%%%%%%%%%%%%%%%%%%%%%%%%%%%%%%%%%%%%%%%%%%%%%%
%%%%%%%%%%%%%%%%%%%%%%%%%%%%%%%%%%%%%%%%%%%%%%%%%%%%%%%%%%%%%%%%%%%%%%%%%%%%%%%%%%%%%%%%%%%%%%%%%%
%%%%%%%%%%%%%%%%%%%%%%%%%%%%%%%%%%%%%%%%%%%%%%%%%%%%%%%%%%%%%%%%%%%%%%%%%%%%%%%%%%%%%%%%%%%%%%%%%%

\end{document}